\def\eqref#1{\textcolor{black}{(\ref{#1})}}
\def\equationautorefname~#1\null{Eq.~(#1)\null}
\newtheorem{theorem}{Theorem}
\theoremstyle{plain}
\newtheorem{definition}{Definition}
\newtheorem{proposition}{Proposition}
\newtheorem{remark}{Remark}
\numberwithin{equation}{section}
\begin{document}
\title[Singular minimal surfaces ]{Singular minimal surfaces which are
minimal}
\author{MUHITTIN EVREN AYDIN$^{1}$}
\address{$^{1}$DEPARTMENT OF MATHEMATICS, FACULTY OF SCIENCE, FIRAT
UNIVERSITY, ELAZIG, 23200, TURKEY}
\author{AYLA ERDUR$^{2}$}
\address{$^{2,3}$DEPARTMENT OF MATHEMATICS, FACULTY OF SCIENCE AND ART,
TEKIRDAG NAMIK KEMAL UNIVERSITY, TEKIRDAG 59100, TURKEY}
\author{MAHMUT ERGUT$^{3}$}
\email{meaydin@firat.edu.tr, aerdur@nku.edu.tr, mergut@nku.edu.tr}
\subjclass[2000]{Primary 53A10 Secondary 53C42, 53C05}
\keywords{Singular minimal surface, minimal surface, translation surface,
semi-symmetric connection.}

\begin{abstract}
In the present paper, we discuss the singular minimal surfaces in a
Euclidean $3-$space $\mathbb{R}^{3}$ which are minimal. In fact, such a
surface is nothing but a plane, a trivial outcome. However, a non-trivial
outcome is obtained when we modify the usual condition of singular
minimality by using a special semi-symmetric metric connection instead of
the Levi-Civita connection on $\mathbb{R}^{3}$. With this new connection, we
prove that, besides planes, the singular minimal surfaces which are minimal
are the generalized cylinders, providing their explicit equations. A trivial
outcome is observed when we use a special semi-symmetric non-metric
connection. Furthermore, our discussion is adapted to the Lorentz-Minkowski
3-space.
\end{abstract}

\maketitle

\section{INTRODUCTION}

Let $\left( \mathbb{R}^{3},\left\langle \cdot ,\cdot \right\rangle \right) $
be a Euclidean $3-$space and $\mathbf{v}$ a fixed unit vector in $\mathbb{R}%
^{3}.$ Let $\mathbf{r}:M^{2}\rightarrow \mathbb{R}_{+}^{3}\left( \mathbf{v}%
\right) $ be a smooth immersion of an oriented compact surface $M^{2}$ into
the halfspace 
\begin{equation*}
\mathbb{R}_{+}^{3}\left( \mathbf{v}\right) :\left\{ p\in \mathbb{R}%
^{3}:\left\langle p,\mathbf{v}\right\rangle >0\right\} .
\end{equation*}%
Denote $H$ and $\mathbf{n}$ the mean curvature and unit normal vector field
on $M^{2}$. Let $\alpha \in \mathbb{R}.$ The potential $\alpha -$energy of $%
\mathbf{r}$ in the direction of $\mathbf{v}$ is defined by \cite{Lopez7}

\begin{equation*}
E\left( \mathbf{r}\right) =\underset{M^{2}}{\int }\left\langle p,\mathbf{v}%
\right\rangle ^{\alpha }dM^{2},
\end{equation*}%
where $dM^{2}$ is the measure on $M^{2}$ with respect to the induced metric
tensor from the Euclidean metric $\left\langle \cdot ,\cdot \right\rangle $
and $p=\mathbf{r}\left( \tilde{p}\right) ,$ $\tilde{p}\in M^{2}.$

Let $\Sigma :M^{2}\times \left( -\theta ,\theta \right) \rightarrow \mathbb{R%
}_{+}^{3}\left( \mathbf{v}\right) $ be a compactly supported variation of $%
\mathbf{r}$ with variaton vector field $\xi .$ The first variation of $E$
becomes 
\begin{equation*}
E^{\prime }\left( 0\right) =-\underset{M^{2}}{\int }\left( 2H\left\langle 
\mathbf{r},\mathbf{v}\right\rangle -\alpha \left\langle \mathbf{n},\mathbf{v}%
\right\rangle \right) \left\langle \xi ,\mathbf{n}\right\rangle ^{\alpha
-1}dM^{2}.
\end{equation*}%
For all compactly supported variations, the immersion $\mathbf{r}$ is a
critical point of $E$ if and only if%
\begin{equation}
2H\left( \tilde{p}\right) =\alpha \frac{\left\langle \mathbf{n}\left( \tilde{%
p}\right) ,\mathbf{v}\right\rangle }{\left\langle \mathbf{r}\left( \tilde{p}%
\right) ,\mathbf{v}\right\rangle },  \label{1}
\end{equation}%
for some point $\tilde{p}\in M^{2}.$

A surface $M^{2}$ is referred to as \textit{singular minimal surface }or $%
\alpha -$\textit{minimal surface }with respect to the vector $\mathbf{v}$,
if holds \autoref{1}, (\cite{Dierkes1,Dierkes2}). In the particular case $\alpha =1$
and $\mathbf{v}=\left( 0,0,1\right) ,$ the surface $M^{2}$ represents 
\textit{two-dimensional analogue of the catenary} which is known as a model
for the surfaces with the lowest gravity center, in other words, one has
minimal potential energy under gravitational forces \cite{Bohme}, \cite%
{Dierkes3}, \cite{Gil}.

A \textit{translation surface} $M^{2}$ in $\mathbb{R}^{3}$ is a surface that
can be written as the sum of two so-called \textit{translating curves} \cite%
{Darboux}. When the translating curves lie in orthogonal planes, up to a
change of coordinates, the surface $M^{2}$ can be locally given in the
explicit form $z=p(x)+q(y),$ where $\left( x,y,z\right) $ is the rectangular
coordinates and $p,q$ smooth functions. In such case, if $M^{2}$ is minimal (%
$H$ vanishes identically \cite[p. 17]{Lopez4}), it describes a plane or the 
\textit{Scherk surface} \cite{Scherk} 
\begin{equation*}
z\left( x,y\right) =\frac{1}{\lambda }\log \left\vert \frac{\cos \lambda x}{%
\cos \lambda y}\right\vert ,\text{ }\lambda \in \mathbb{R},\text{ }\lambda
\neq 0.
\end{equation*}%
If the translating curves lie in non-orthogonal planes, the translation
surface $M^{2}$ is locally given by $z=p\left( x\right) +q\left( y+\mu
x\right) ,$ $\mu \in \mathbb{R},\ \mu \neq 0,$ and so-called an \textit{%
affine translation surface} or a \textit{translation graph} \cite{Liu2}, 
\cite{Yang2}. A minimal affine translation surface is so-called \textit{%
affine Scherk surface} and given in the explicit form%
\begin{equation*}
z\left( x,y\right) =\frac{1}{\lambda }\log \left\vert \frac{\cos \lambda 
\sqrt{1+\mu ^{2}}x}{\cos \lambda \left( y+\mu x\right) }\right\vert .
\end{equation*}

L\'{o}pez \cite{Lopez7} obtained the singular minimal translation surfaces
in $\mathbb{R}^{3}$ of type $z=p\left( x\right) +q\left( y\right) $ with
respect to horizontal and vertical directions. This result was generalized
to higher dimensions in \cite{EAM}. For further study of singular minimal
surfaces, we refer to the L\'{o}pez's series of interesting papers on the
solutions of the Dirichlet problem for the $\alpha -$singular minimal
surface equation \cite{Lopez8}, the Lorentz-Minkowski counterpart of the
condition of singular minimality \cite{Lopez10}, the compact singular
minimal surfaces \cite{Lopez11} and the singular minimal surfaces with
density \cite{Lopez12}.

In this paper, we approach a singular minimal surface $M^{2}$ in $\mathbb{R}%
^{3}$ which is minimal\textit{.} We hereinafter assume that $\alpha \neq 0$
in \autoref{1}, otherwise any minimal surface obeys our approach, which is
trivial. Under this circumstance, \autoref{1} gives $\left\langle \mathbf{n%
}\left( \tilde{p}\right) ,\mathbf{v}\right\rangle =0,$ that is, the tangent
plane of $M^{2}$ at any point $\tilde{p}$ is parallel to $\mathbf{v}.$ In
such case, the surface $M^{2}$ belongs to the class of so-called \textit{%
constant angle surfaces} and has to be a plane parallel to $\mathbf{v}$ (see 
\cite[Proposition 9]{Munteanu}), yielding the following outcome.

\begin{proposition}\label{prop1}
Let $M^{2}$ be a singular minimal surface in $\mathbb{R}^{3}$ with respect
to an arbitrary vector $\mathbf{v}.$ If $M^{2}$ is minimal, then it is a
plane parallel to $\mathbf{v}.$
\end{proposition}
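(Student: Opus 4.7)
The plan is to apply the defining condition \autoref{1} of a singular minimal surface together with the minimality hypothesis $H\equiv 0$, and then exploit the fact that the image of $\mathbf{r}$ lies in the halfspace $\mathbb{R}^{3}_{+}(\mathbf{v})$, where $\langle \mathbf{r},\mathbf{v}\rangle$ is strictly positive.

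First, I would substitute $H=0$ into \autoref{1}. The right-hand side is a well-defined smooth function on $M^{2}$ because the denominator $\langle \mathbf{r},\mathbf{v}\rangle$ never vanishes (the surface is immersed into $\mathbb{R}^{3}_{+}(\mathbf{v})$). Since $\alpha \neq 0$ by the standing assumption, the equation reduces to the pointwise identity
\begin{equation*}
\langle \mathbf{n}(\tilde{p}),\mathbf{v}\rangle =0\qquad \text{for every }\tilde{p}\in M^{2}.
\end{equation*}
Geometrically this says that the fixed direction $\mathbf{v}$ lies in the tangent plane of $M^{2}$ at each point, i.e., $M^{2}$ makes a constant angle $\pi/2$ with $\mathbf{v}$.

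Next, I would invoke the classification of constant angle surfaces in Euclidean $3$-space. A surface whose unit normal is everywhere perpendicular to a fixed direction $\mathbf{v}$ is necessarily a generalized cylinder whose rulings are parallel to $\mathbf{v}$, since the integral curves of $\mathbf{v}$ on $M^{2}$ are geodesics and straight lines in $\mathbb{R}^{3}$. This is exactly the content of Proposition~9 in Munteanu's work, which may be cited directly.

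Finally, I would combine the cylindrical structure with the minimality condition. A generalized cylinder over a plane curve is minimal if and only if its directrix has zero curvature, i.e., is a straight line. Hence $M^{2}$ must be a plane, and since it contains the direction $\mathbf{v}$ in its tangent space, it is parallel to $\mathbf{v}$. The only genuinely nontrivial step is the appeal to the constant-angle classification, but once that is granted the conclusion is immediate.
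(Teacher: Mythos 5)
Your argument is correct and follows essentially the same route as the paper: both use $H\equiv 0$ and $\alpha\neq 0$ in \autoref{1} (together with $\langle\mathbf{r},\mathbf{v}\rangle>0$ on the halfspace) to deduce $\langle\mathbf{n},\mathbf{v}\rangle\equiv 0$, and then invoke the Munteanu--Nistor classification of constant angle surfaces. If anything you are more careful than the paper's one-line citation, since you make explicit that perpendicularity of $\mathbf{n}$ to $\mathbf{v}$ alone only yields a generalized cylinder ruled by lines parallel to $\mathbf{v}$, and that minimality must be used a second time to force the directrix to be a straight line and hence the surface to be a plane.
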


This result is changed when we modify \autoref{1} by using a special
semi-symmetric metric connection $\nabla $ (see \autoref{4}) on $\mathbb{R}%
^{3}$. In \autoref{sec3}, we prove that, besides planes, the singular minimal
surfaces which are minimal with respect to $\nabla $ are the generalized
cylinders, providing their explicit equations. It is also observed, in \autoref{sec3}, that this approach produces only trivial example when a special
semi-symmetric non-metric connection $D$ (see \autoref{23})\ is used.

We find the motivation in Wang's paper \cite{Wang} whose minimal translation
surfaces were obtained with respect to the connections $\nabla $ and $D$.\
The notion of a semi-symmetric metric (resp. non-metric) connection on a
Riemannian manifold were defined by Hayden \cite{Hayden} (resp. Agashe \cite%
{Agashe1}) and since then has been studied by many authors. Without giving a
complete list, we may refer to \cite{Agashe2}, \cite{Akyol}, \cite{Chaubey}, 
\cite{De}, \cite{Dogru},\cite{Duggal}, \cite{Gozutok}, \cite{Imai}, \cite{Murat}-\cite%
{Ozgur}, \cite{Yano2}-\cite{Yucesan2}. The present authors also obtained
singular minimal translation surfaces in $\mathbb{R}^{3}$ with respect to
the connections $\nabla $ and $D$ \cite{Erdur}.

Let $\mathbb{R}_{1}^{3}$ be the Lorentz-Minkowski $3-$space endowed with the
canonical Lorentzian metric $\left\langle \cdot ,\cdot \right\rangle
_{L}=dx^{2}+dy^{2}-dz^{2}$. Then we have \cite[Definition 1.1]{Lopez10}

\begin{definition}\label{def1}
Let $\mathbf{r}$ be a smooth immersion of a spacelike surface $M^{2}$ in the
halfspace $z>0$ of $\mathbb{R}_{1}^{3}$ and $\mathbf{n}$ unit normal vector
field on $M^{2}$ and $H$ the mean curvature. $M^{2}$ is called $\alpha -$%
singular maximal surface if satisfies 
\begin{equation}
H=-\alpha \frac{\left\langle \mathbf{n},\left( 0,0,1\right) \right\rangle
_{L}}{z},\text{ }\alpha \neq 0.  \label{2}
\end{equation}
\end{definition}

Due to the fact that the $z-$coordinate represents the time coordinate, the
concept of gravity has no meaning. Therefore, unlike the Riemannian case,
\autoref{2} describes only spacelike surfaces with prescribed angle
between $\mathbf{n}$ and the $z-$axis. Point out that $H$ is non-vanishing
in \autoref{2} if $\alpha \neq 0$ because $\left\langle \mathbf{n},\left(
0,0,1\right) \right\rangle _{L}\neq 0$ for timelike vectors $\mathbf{n}$ and 
$\left( 0,0,1\right) $ and so we can not adapt \autoref{2} to our study as
is. For this reason, we modify the concept as follows:

\begin{definition}\label{def2}
Let $\mathbf{r}$ be a smooth immersion of an oriented timelike surface $M^{2}
$ in $\mathbb{R}_{1}^{3}$ and $\mathbf{n}$ unit normal vector field on $M^{2}
$ and $H$ the mean curvature. Let $\mathbf{v}\in \mathbb{R}_{1}^{3},$ $%
\mathbf{v}\neq \mathbf{0},$ a spacelike vector non-parallel to $\mathbf{n}$
such that $\mathbf{n}$ and $\mathbf{v}$ span a spacelike 2-space. Then $M^{2}
$ is called singular minimal surface with respect to $\mathbf{v}$ if
satisfies%
\begin{equation}
2H=\alpha \frac{\left\langle \mathbf{n},\mathbf{v}\right\rangle _{L}}{%
\left\langle \mathbf{r},\mathbf{v}\right\rangle _{L}},\text{ }\alpha \in 
\mathbb{R},\text{ }\alpha \neq 0.  \label{3}
\end{equation}
\end{definition}

With \autoref{def2} we may view the singular minimal surface $M^{2}$ as a
timelike surface in $\mathbb{R}_{1}^{3}$ with prescribed Lorentz spacelike
angle between $\mathbf{n}$ and $\mathbf{v}$. If $M^{2}$ is minimal, it
follows from  \autoref{3} that $\left\langle \mathbf{n},\mathbf{v}%
\right\rangle _{L}=0,$ namely the angle is $\frac{\pi }{2},$ and, as in
Riemannian case, $M^{2}$ becomes a timelike constant angle surface which has
to be a plane (see \cite[Theorem 3.1]{Guler}), yielding the following
trivial outcome.

\begin{proposition}\label{prop2}
Let $M^{2}$ be a singular minimal surface in $\mathbb{R}_{1}^{3}$ with
respect to a spacelike vector $\mathbf{v}$. If $M^{2}$ is minimal, then it
is a plane parallel to $\mathbf{v}.$
\end{proposition}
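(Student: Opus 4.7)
The plan is to convert the minimality hypothesis into an orthogonality condition via \autoref{3}, and then invoke the existing classification of timelike constant-angle surfaces in $\mathbb{R}_{1}^{3}$ already cited in the excerpt.

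First, I would impose $H\equiv 0$ in \autoref{3}. Since $\alpha \neq 0$ and the denominator $\left\langle \mathbf{r},\mathbf{v}\right\rangle _{L}$ is a legitimate pointwise factor (the equation is local, and at points where it vanishes the statement is understood as $\left\langle \mathbf{n},\mathbf{v}\right\rangle _{L}=0$ as well), the vanishing of the left-hand side forces $\left\langle \mathbf{n},\mathbf{v}\right\rangle _{L}=0$ at every point of $M^{2}$. Equivalently, the fixed spacelike vector $\mathbf{v}$ lies in the tangent plane of $M^{2}$ everywhere, which is precisely the statement that the Lorentz angle between $\mathbf{n}$ and $\mathbf{v}$ is the constant value $\pi /2$.

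Next, I would check that the hypotheses of \autoref{def2} remain consistent in this orthogonal limit: $\mathbf{n}$ is a unit spacelike normal to the timelike surface $M^{2}$, $\mathbf{v}$ is spacelike, and the two vectors are Lorentz-orthogonal, so the 2-plane they span is indeed spacelike. Thus $M^{2}$ fits the setting of a timelike constant-angle surface in $\mathbb{R}_{1}^{3}$ with respect to the spacelike direction $\mathbf{v}$. Applying G\"{u}ler's Theorem 3.1, as referenced just before the proposition, I would conclude that $M^{2}$ must be a (timelike) plane. Since $\mathbf{v}$ is everywhere tangent to this plane, the plane is parallel to $\mathbf{v}$, which is the desired conclusion.

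The only delicate point, and therefore the main thing to verify, is that the degenerate angle value $\pi /2$ falls within the hypotheses of the invoked classification and is compatible with the causal character conditions of \autoref{def2}; this is a direct inspection rather than a genuine computation, which is why the outcome is labelled trivial. The rest of the argument reduces to a one-line algebraic manipulation of \autoref{3} under the assumption $H=0$.
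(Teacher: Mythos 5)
Your argument is correct and is essentially the paper's own: the authors likewise set $H=0$ in \autoref{3}, use $\alpha\neq 0$ to deduce $\left\langle \mathbf{n},\mathbf{v}\right\rangle _{L}=0$, interpret this as a constant Lorentz spacelike angle $\tfrac{\pi}{2}$, and invoke G\"{u}ler's Theorem~3.1 on timelike constant angle surfaces to conclude that $M^{2}$ is a plane parallel to $\mathbf{v}$. Your extra check that $\mathbf{n}$ and $\mathbf{v}$ still span a spacelike $2$-space is a reasonable refinement but does not change the route.
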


In \autoref{sec4}, we also state non-trivial results in $\mathbb{R}_{1}^{3}$ for
singular minimal surfaces which are minimal with respect to the connections $%
\nabla $ and $D$ given by Eqs. \eqref{27} and \eqref{45}, respectively.

\section{Preliminaries}

Most of following notions can be found \cite{Chen}, \cite{O'Neill}, \cite%
{Yano1}.

Let $\left( \bar{M},\bar{g}\right) $ be a semi-Riemannian manifold and $\bar{%
\nabla}$ a linear connection on $\bar{M}.$ The \textit{torsion tensor field }%
$T$\textit{\ }of $\bar{\nabla}$ is defined by%
\begin{equation*}
T\left( \mathbf{\bar{x}},\mathbf{\bar{y}}\right) =\bar{\nabla}_{\mathbf{\bar{%
x}}}\mathbf{\bar{y}}-\bar{\nabla}_{\mathbf{\bar{x}}}\mathbf{\bar{y}}-\left[ 
\mathbf{\bar{x}},\mathbf{\bar{y}}\right] ,
\end{equation*}%
where $\mathbf{\bar{x}}$\textbf{\ }and\textbf{\ }$\mathbf{\bar{y}}$ are
vector fields on $\bar{M}$. A linear connection is called a \textit{%
semi-symmetric }(resp. \textit{non-})\textit{\ metric connection }if there
exist a $1-$form $\pi $ such that 
\begin{equation*}
T\left( \mathbf{\bar{x}},\mathbf{\bar{y}}\right) =\pi \left( \mathbf{\bar{y}}%
\right) \mathbf{\bar{x}}-\pi \left( \mathbf{\bar{x}}\right) \mathbf{\bar{y}},%
\text{ }\bar{\nabla}\bar{g}=0\text{ (resp. }\bar{\nabla}\bar{g}\neq 0\text{).%
}
\end{equation*}%
The linear connection $\bar{\nabla}$ is called \textit{Levi-Civita connection%
} if $T=0$ and $\bar{\nabla}\bar{g}=0$. We denote the Levi-Civita connection
by $\bar{\nabla}^{L}.$

Let $M$ be a semi-Riemannian submanifold of $\bar{M}$ and $\nabla ^{L}$ and $%
g$ the induced Levi-Civita connection and metric tensor, respectively. Then
the \textit{Gauss formula} follows%
\begin{equation*}
\bar{\nabla}_{\mathbf{x}}^{L}\mathbf{y=}\nabla _{\mathbf{x}}^{L}\mathbf{y}%
+h\left( \mathbf{x},\mathbf{y}\right) ,
\end{equation*}%
where $h$ is so-called \textit{second fundamental form} of $M$ and $\mathbf{x%
}$\textbf{\ }and\textbf{\ }$\mathbf{y}$ tangent vector fields to $M.$ Let $%
\left\{ \mathbf{f}_{1},...,\mathbf{f}_{n}\right\} $ be an orthonormal frame
on $M$ at any point $p\in M.$ Then the \textit{mean curvature vector} $%
\mathbf{H}\left( p\right) $ at $p$ is defined by 
\begin{equation*}
\mathbf{H}\left( p\right) =\frac{1}{n}\sum_{i=1}^{n}\varepsilon _{i}h\left( 
\mathbf{f}_{i},\mathbf{f}_{i}\right) ,
\end{equation*}%
where $\varepsilon _{i}=g\left( \mathbf{f}_{i},\mathbf{f}_{i}\right) $ and $%
n=\dim M.$ The length of mean curvature vector is called \textit{mean
curvature}. A semi-Riemannian submanifold is called \textit{minimal} if its
mean curvature vanishes identically.

Let $\bar{M}=\mathbb{R}_{1}^{3}$ be the Lorentz-Minkowski $3-$space and $%
\bar{g}=\left\langle \cdot ,\cdot \right\rangle _{L}=dx^{2}+dy^{2}-dz^{2}$.
A vector field $\mathbf{x}$ on $\mathbb{R}_{1}^{3}$ is said to be \textit{%
spacelike }(resp. \textit{timelike}) if $\mathbf{x}=0$ or $\left\langle 
\mathbf{x},\mathbf{x}\right\rangle _{L}>0$ (resp. $\left\langle \mathbf{x},%
\mathbf{x}\right\rangle _{L}<0$). A vector field $\mathbf{x}$ is said to be 
\textit{null} if $\left\langle \mathbf{x},\mathbf{x}\right\rangle _{L}=0$
and $\mathbf{x}\neq 0.$ A timelike vector $\mathbf{x}=\left( a,b,c\right) $
is said to be \textit{future pointing} (resp. \textit{past pointing}) if $%
c>0 $ (resp. $c<0$). A \textit{Lorentz timelike angle} $\theta $ between two
future (past) pointing timelike vectors $\mathbf{x}$ and $\mathbf{y}$ is
associated with \cite{Fu}%
\begin{equation*}
\left\vert \left\langle \mathbf{x},\mathbf{y}\right\rangle _{L}\right\vert =%
\sqrt{\left\vert \left\langle \mathbf{x},\mathbf{x}\right\rangle
_{L}\right\vert }\sqrt{\left\vert \left\langle \mathbf{y},\mathbf{y}%
\right\rangle _{L}\right\vert }\cosh \theta .
\end{equation*}%
A \textit{Lorentz spacelike angle} $\theta $ between two spacelike vectors $%
\mathbf{x}$ and $\mathbf{y}$ spanning a spacelike vector subspace ($\mathbb{R%
}_{1}^{3}$ induces a Riemannian metric on it) is associated with \cite{Fu}%
\begin{equation*}
\left\vert \left\langle \mathbf{x},\mathbf{y}\right\rangle _{L}\right\vert =%
\sqrt{\left\vert \left\langle \mathbf{x},\mathbf{x}\right\rangle
_{L}\right\vert }\sqrt{\left\vert \left\langle \mathbf{y},\mathbf{y}%
\right\rangle _{L}\right\vert }\cos \theta .
\end{equation*}

Let $M^{2}$ be an immersed surface into $\mathbb{R}_{1}^{3}$. The surface $%
M^{2}$ is said to be \textit{spacelike} (resp. \textit{timelike}) if all
tangent planes of $M^{2}$ are spacelike (resp. timelike). For such a
spacelike (resp. timelike) surface, we have the decomposition $\mathbb{R}%
_{1}^{3}=T_{p}M^{2}\oplus \left( T_{p}M^{2}\right) ^{\perp },$\ where $%
T_{p}M^{2}$ is the tangent plane of $M^{2}$ at the point $p.$ Notice that $%
\left( T_{p}M^{2}\right) ^{\perp }$ is a timelike (resp. spacelike) $1-$%
space of $\mathbb{R}_{1}^{3}$. A \textit{Gauss map }$\mathbf{n}$ of $M^{2}$
is a smooth map $\mathbf{n}:M^{2}\rightarrow \mathbb{R}_{1}^{3},$ $%
\left\vert \left\langle \mathbf{n},\mathbf{n}\right\rangle _{L}\right\vert
=1.$

We finish this section remarking that a spacelike (resp. timelike) surface
in $\mathbb{R}_{1}^{3}$ is locally a graph of a smooth function $u\left(
x,y\right) $ (resp. $u\left( x,z\right) $ or $u\left( y,z\right) $) \cite[%
Proposition 3.3]{Lopez2}.

\section{Singular minimal surfaces in $\mathbb{R}^{3}$}\label{sec3}

\subsection{$\protect\nabla -$Singular minimal surfaces}

Let $\nabla ^{L}$ be the Levi-Civita connection on $\mathbb{R}^{3}$ and $%
\left\{ \mathbf{e}_{1},\mathbf{e}_{2},\mathbf{e}_{3}\right\} $ the standard
basis on $\mathbb{R}^{3}$ and $\mathbf{x},\mathbf{y}$ tangent vector fields
to $\mathbb{R}^{3}$. Consider the following semi-symmetric metric connection
on $\mathbb{R}^{3}$ \cite{Wang}%
\begin{equation}
\nabla _{\mathbf{x}}\mathbf{y}=\nabla _{\mathbf{x}}^{L}\mathbf{y}%
+\left\langle \mathbf{y},\mathbf{e}_{3}\right\rangle \mathbf{x}-\left\langle 
\mathbf{x},\mathbf{y}\right\rangle \mathbf{e}_{3}.  \label{4}
\end{equation}%
The nonzero derivatives are 
\begin{equation*}
\nabla _{\mathbf{e}_{1}}\mathbf{e}_{1}=-\mathbf{e}_{3},\text{ }\nabla _{%
\mathbf{e}_{1}}\mathbf{e}_{3}=\mathbf{e}_{1},\text{ }\nabla _{\mathbf{e}_{2}}%
\mathbf{e}_{2}=-\mathbf{e}_{3},\text{ }\nabla _{\mathbf{e}_{2}}\mathbf{e}%
_{3}=\mathbf{e}_{2}.
\end{equation*}

\begin{definition}\label{def3}
Let $\mathbf{r}$ be a smooth immersion of an oriented surface $M^{2}$ into $%
\mathbb{R}^{3}$ and $\mathbf{n}$ unit normal vector field on $M^{2}$ and $%
H^{\nabla }$ the mean curvature with respect to $\nabla .$ Let $\mathbf{v}%
\in \mathbb{R}^{3},$ $\mathbf{v}\neq \mathbf{0},$ a unit fixed vector
non-parallel to $\mathbf{n}.$ The surface $M^{2}$ is called $\nabla -$s%
\textit{ingular minimal surface} with respect to $\mathbf{v}$ if holds 
\begin{equation}
2H^{\nabla }=\alpha \frac{\left\langle \mathbf{n},\mathbf{v}\right\rangle }{%
\left\langle \mathbf{r},\mathbf{v}\right\rangle },\text{ }\alpha \in \mathbb{%
R},\text{ }\alpha \neq 0.  \label{5}
\end{equation}
\end{definition}

In particular, the surface $M^{2}$ is said to be $\nabla -$\textit{minimal}
if $H^{\nabla }=0$. With \autoref{def3}, we first observe the $\nabla -$%
singular minimal surfaces of type $z=u\left( x,y\right) $ which are $\nabla
- $minimal.

\begin{theorem}\label{teo1}
Let $M^{2}$ be a $\nabla -$singular minimal surface in $\mathbb{R}^{3}$ of
type $z=u\left( x,y\right) $ with respect to a unit vector $\mathbf{v}%
=\left( a,b,c\right) ,$ $a^{2}+b^{2}\neq 0.$ If $M^{2}$ is $\nabla -$%
minimal, then one of the following happens

\begin{enumerate}
\item $\mathbf{v}=\left( 0,b\neq 0,c\right) $ and 
\begin{equation*}
u\left( x,y\right) =\frac{c}{b}y+\frac{1}{2b^{2}}\ln \left[ \cos \left(
2bx+\lambda _{1}\right) \right] +\lambda _{2};
\end{equation*}

\item $\mathbf{v}=\left( a\neq 0,0,c\right) $ and 
\begin{equation*}
u\left( x,y\right) =\frac{c}{a}x+\frac{1}{2a^{2}}\ln \left[ \cos \left(
2ay+\lambda _{3}\right) \right] +\lambda _{4};
\end{equation*}

\item $\mathbf{v}=\left( a,b,c\right) ,$ $ab\neq 0,$ and 
\begin{equation*}
u\left( x,y\right) =\frac{c}{a}x-\frac{1}{2\left( a^{2}+b^{2}\right) }\ln %
\left[ \cos \left( -2\left\vert a\right\vert \left( y-\frac{b}{a}x\right)
+\lambda _{5}\right) \right] +\frac{bc}{a^{2}+b^{2}}\left( y-\frac{b}{a}%
x\right) +\lambda _{6},
\end{equation*}%
where $\lambda _{1},...,\lambda _{6}\in \mathbb{R}$.
\end{enumerate}
\end{theorem}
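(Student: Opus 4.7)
My strategy is to combine the two hypotheses into conditions on the graph function $u$, solve the resulting first-order PDE by characteristics, and then feed the ansatz back into the $\nabla$-minimal equation to obtain a tractable ODE. Since $\alpha\neq 0$ in \autoref{5}, the identity $H^{\nabla}=0$ forces $\langle\mathbf{n},\mathbf{v}\rangle=0$ at every point. Writing $\mathbf{r}(x,y)=(x,y,u(x,y))$ with upward unit normal $\mathbf{n}=(-u_x,-u_y,1)/W$, $W=\sqrt{1+u_x^2+u_y^2}$, this orthogonality becomes the linear first-order PDE $a u_x + b u_y = c$. Separately, decomposing \autoref{4} into tangential and normal parts yields $\mathbf{H}^{\nabla} = \mathbf{H} - \langle\mathbf{n},\mathbf{e}_3\rangle\,\mathbf{n}$, so the $\nabla$-minimal condition translates (after clearing $W^3$) into the quasilinear PDE
\begin{equation*}
(1+u_y^2)u_{xx}-2u_xu_yu_{xy}+(1+u_x^2)u_{yy} = -2(1+u_x^2+u_y^2).
\end{equation*}

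Next I would solve $a u_x + b u_y = c$ by the method of characteristics; this naturally produces the three cases of the theorem: (i) $a=0$ gives $u=(c/b)y+f(x)$; (ii) $b=0$ gives $u=(c/a)x+g(y)$; (iii) $ab\neq 0$ gives $u = (c/a)x + h(\eta)$ with $\eta = y-(b/a)x$. In case (i), the mixed and $yy$-derivatives vanish, and using the unit-vector relation $b^2+c^2=1$ the quasilinear PDE collapses to the separable Riccati ODE $f''(x) = -2(1+b^2 f'(x)^2)$. Separating variables gives $(1/b)\arctan(b f') = -2x+\mathrm{const}$, and a second quadrature yields
\begin{equation*}
f(x)=\frac{1}{2b^2}\ln\!\left|\cos(2bx+\lambda_1)\right|+\lambda_2,
\end{equation*}
exactly the formula in (i); case (ii) is entirely symmetric.

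The main obstacle will be case (iii). Substituting $u_x = c/a - (b/a)h'$, $u_y = h'$ into the quasilinear PDE produces an expression which, a priori, contains cross terms linear in $h'$ and a non-constant coefficient of $h''$. Only by invoking the unit-vector identities $a^2+b^2 = 1-c^2$ and the algebraic identity $(a^2+b^2)(a^2+c^2) - b^2c^2 = a^2$ (both consequences of $|\mathbf{v}|=1$) do the linear-in-$h'$ cross terms cancel and the coefficient of $h''$ collapse to the constant $1/a^2$. After completing the square in $s := h' - bc/(a^2+b^2)$, the equation becomes the Riccati ODE
\begin{equation*}
s'(\eta) = -2(a^2+b^2)\left(s^2+\frac{a^2}{(a^2+b^2)^2}\right),
\end{equation*}
which integrates by the same $\arctan$--$\ln\lvert\cos\rvert$ procedure as in case (i). Back-substituting $\eta = y-(b/a)x$ and adding the linear contribution $(bc/(a^2+b^2))\eta$ that comes from undoing the Riccati shift then reproduces the formula in (iii). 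In summary, the plan reduces the problem to two Riccati integrations; the only non-routine step is the algebraic cancellation in case (iii), which is what makes the unit-vector hypothesis essential.
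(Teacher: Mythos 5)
Your proposal follows the paper's proof essentially step for step: since $\alpha \neq 0$, $H^{\nabla}=0$ forces $\left\langle \mathbf{n},\mathbf{v}\right\rangle =0$, i.e. $au_{x}+bu_{y}=c$; solving this by characteristics produces exactly the three cases; and substituting each ansatz into the quasilinear $\nabla$-minimality equation reduces the problem to a Riccati ODE integrated via $\arctan$ and $\ln \left\vert \cos \right\vert$. Your observation that $a^{2}+b^{2}+c^{2}=1$ collapses the coefficient of $h''$ to $1/a^{2}$ and that completing the square in $h'-bc/(a^{2}+b^{2})$ normalizes the Riccati equation is precisely the simplification behind the paper's passage from \eqref{10} to \eqref{11}. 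The one point you should fix is a sign: your own decomposition $\mathbf{H}^{\nabla}=\mathbf{H}-\left\langle \mathbf{n},\mathbf{e}_{3}\right\rangle \mathbf{n}$, with the upward normal $\mathbf{n}=(-u_{x},-u_{y},1)/W$ you chose, gives $2H=2\left\langle \mathbf{n},\mathbf{e}_{3}\right\rangle =2/W$, hence $(1+u_{y}^{2})u_{xx}-2u_{x}u_{y}u_{xy}+(1+u_{x}^{2})u_{yy}=+2(1+u_{x}^{2}+u_{y}^{2})$ — this is the paper's \eqref{7} — whereas you wrote $-2(1+u_{x}^{2}+u_{y}^{2})$ on the right. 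This slip does not change the structure of the argument (it merely sends $f''=2\left(1+(bf')^{2}\right)$ to $f''=-2\left(1+(bf')^{2}\right)$ and thus flips the sign in front of the $\ln \left\vert \cos \right\vert$ terms), but you should be aware of it: with the corrected sign the quadrature in case (1) gives $-\tfrac{1}{2b^{2}}\ln \left\vert \cos (2bx+\lambda _{1})\right\vert$ rather than $+\tfrac{1}{2b^{2}}\ln \left\vert \cos (2bx+\lambda _{1})\right\vert$, which in fact exposes a sign discrepancy already present between the paper's \eqref{8} and the formula stated in the theorem (the third case, by contrast, is consistent with \eqref{7}). So the approach is the paper's own; only the bookkeeping of this one sign needs to be settled consistently.
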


\begin{proof}
The unit normal vector field on $M^{2}$ is%
\begin{equation*}
\mathbf{n}=\frac{-u_{x}\mathbf{e}_{1}-u_{y}\mathbf{e}_{2}+\mathbf{e}_{3}}{%
\sqrt{1+\left( u_{x}\right) ^{2}+\left( u_{y}\right) ^{2}}},
\end{equation*}%
where $u_{x}=\frac{\partial u}{\partial x}$ and so. Suppose that $M^{2}$ is $%
\nabla -$minimal. Due to $\alpha \neq 0,$ \autoref{5} gives $\left\langle 
\mathbf{n},\mathbf{v}\right\rangle =0$ and%
\begin{equation}
au_{x}+bu_{y}=c.  \label{6}
\end{equation}%
The condition of $\nabla -$minimality yields%
\begin{equation}
\left[ 1+\left( u_{y}\right) ^{2}\right] u_{xx}-2u_{x}u_{y}u_{xy}+\left[
1+\left( u_{x}\right) ^{2}\right] u_{yy}-2\left[ 1+\left( u_{x}\right)
^{2}+\left( u_{y}\right) ^{2}\right] =0.  \label{7}
\end{equation}%
We distinguish several cases: the first case is that $a=0$. Then \autoref{6} follows $u\left( x,y\right) =f\left( x\right) +\frac{c}{b}y,$ for
an arbitrary smooth function $f.$ Considering this into \autoref{7} leads
to%
\begin{equation}
\frac{bf^{\prime \prime }}{1+\left( bf^{\prime }\right) ^{2}}=2b,  \label{8}
\end{equation}%
where $f^{\prime }=\frac{df}{dx}$ and so. The first statement of the theorem
is obtained by integrating \autoref{8}. The roles of $x$ and $y$ in \autoref{7} are symmetric and hence we may conclude the second statement of
the theorem by similar steps when $a\neq 0$ and $b=0.$ The last case is that 
$ab\neq 0.$ Then the solution to \autoref{6} is given by 
\begin{equation}
u\left( x,y\right) =\frac{c}{a}x+g\left( y-\frac{b}{a}x\right) ,  \label{9}
\end{equation}%
for a smooth function $g.$ Substituting \autoref{9} into \autoref{7}
follows 
\begin{equation}
g^{\prime \prime }-2\left[ a^{2}+\left( c-bg^{\prime }\right) ^{2}+\left(
ag^{\prime }\right) ^{2}\right] =0,  \label{10}
\end{equation}%
for $g^{\prime }=\frac{dg}{d\tilde{y}},$ $g^{\prime \prime }=\frac{d^{2}g}{d%
\tilde{y}^{2}},$ $\tilde{y}=y-\frac{b}{a}x.$ \autoref{10} can be rewritten
as
\begin{equation}
\frac{\left( a^{2}+b^{2}\right) g^{\prime \prime }}{a^{2}+\left( bc-\left(
a^{2}+b^{2}\right) g^{\prime }\right) ^{2}}=2.  \label{11}
\end{equation}%
The proof is completed by integrating \autoref{11}.
\end{proof}

\begin{remark}\label{rem1}
The surface given in the first statement of \autoref{teo1} is a generalized
cylinder (see \cite[p. 439]{Gray}) and may be written parametrically 
\begin{equation*}
\mathbf{r}\left( x,y\right) =\left( x,0,\frac{1}{2b^{2}}\ln \left[ \cos
\left( 2bx+\lambda _{1}\right) \right] +\lambda _{2}\right) +y\left( 0,1,%
\frac{c}{b}\right) .
\end{equation*}%
This is a $\nabla -$minimal translation surface of type $z=p\left( x\right)
+q\left( y\right) $ which was already found by Wang \cite{Wang}. The same
may be concluded for the above second statement. However, the surface
described in the last statement of \autoref{teo1} is the generalized cylinder
parametrically written by 
\begin{equation*}
\mathbf{r}\left( x,\tilde{y}\right) =x\left( 1,\frac{b}{a},\frac{c}{a}%
\right) +\left( 0,\tilde{y},g\left( \tilde{y}\right) \right) ,
\end{equation*}%
where $\tilde{y}=y-\frac{b}{a}x.$ Due to $b\neq 0,$ it belongs to the class
of affine translation surfaces and a new example of $\nabla -$minimal
surfaces.
\end{remark}

In the following we classify $\nabla -$singular minimal surfaces in $\mathbb{%
R}^{3}$ of type $y=u\left( x,z\right) $ which are $\nabla -$minimal.

\begin{theorem}\label{teo2}
Let $M^{2}$ be a $\nabla -$singular minimal surface in $\mathbb{R}^{3}$ of
type $y=u\left( x,z\right) $ with respect to a unit vector $\mathbf{v}%
=\left( a,b,c\right) ,$ $a^{2}+c^{2}\neq 0.$ If $M^{2}$ is $\nabla -$%
minimal, then one of the following happens

\begin{enumerate}
\item $M^{2}$ is a plane parallel to the vector $\left( 0,0,1\right) $;

\item $\mathbf{v}=\left( 0,b,c\right) ,$ $bc\neq 0,$ and 
\begin{equation*}
u\left( x,z\right) =\frac{b}{c}z+\frac{1}{2bc}\ln \left[ \cos \left(
2bx+\lambda _{1}\right) \right] +\lambda _{2};
\end{equation*}

\item $\mathbf{v}=\left( a,b,0\right) ,$ $a\neq 0,$ and 
\begin{equation*}
u\left( x,z\right) =\frac{b}{a}x\pm \frac{1}{2\left\vert a\right\vert }%
\arctan \left( \frac{1}{\left\vert a\lambda _{2}\right\vert }\sqrt{%
e^{4z}-a^{2}}\right) +\lambda _{3};
\end{equation*}

\item $\mathbf{v}=\left( a,0,c\right) ,$ $ac\neq 0,$ and%
\begin{equation*}
u\left( x,z\right) =\pm \frac{1}{2\left\vert a\right\vert }\arctan \left( 
\frac{1}{\left\vert \lambda _{4}\right\vert }\sqrt{e^{4a^{2}\left( z-\frac{c%
}{a}x\right) }-\lambda _{4}^{2}}\right) +\lambda _{5},\text{ }\lambda
_{4}\neq 0;
\end{equation*}

\item $\mathbf{v}=\left( a,b,c\right) ,$ $ac\neq 0,$ and%
\begin{equation*}
u\left( x,z\right) =\frac{b}{a}x+h\left( z-\frac{c}{a}x\right) ,
\end{equation*}%
where $h$ is a smooth function satisfying%
\begin{equation*}
\left. 
\begin{array}{c}
z-\frac{c}{a}x=\frac{1}{2\left\vert a\right\vert \left( a^{2}+c^{2}\right)
\left( a^{2}+b^{2}c^{2}\right) }\left\{ bc\left( 2\left\vert a\right\vert
h+\lambda _{6}\right) -\right.  \\ 
\left. -\left\vert a\right\vert \ln \left[ bc\cos \left( 2\left\vert
a\right\vert h+\lambda _{6}\right) -\left\vert a\right\vert \sin \left(
2\left\vert a\right\vert h+\lambda _{6}\right) \right] \right\} +\lambda
_{7},%
\end{array}%
\right. 
\end{equation*}%
for $\lambda _{1},...,\lambda _{7}\in \mathbb{R}.$
\end{enumerate}
\end{theorem}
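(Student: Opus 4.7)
The plan is to parametrize $M^2$ as the graph $\mathbf{r}(x,z) = (x, u(x,z), z)$ with unit normal $\mathbf{n} = (-u_x, 1, -u_z)/W$, where $W = \sqrt{1+u_x^2+u_z^2}$, and reduce the statement $H^{\nabla}=0$ to a coupled pair of PDEs for $u$. Since $\alpha\neq 0$, equation \eqref{5} together with $H^{\nabla}=0$ forces $\langle \mathbf{n},\mathbf{v}\rangle=0$, which is the first-order linear PDE
\begin{equation*}
a u_x + c u_z = b.
\end{equation*}
From \eqref{4} the normal projection yields $\mathbf{H}^{\nabla} = \mathbf{H}^{L} - \langle \mathbf{n},\mathbf{e}_3\rangle \mathbf{n}$; since $\langle \mathbf{n},\mathbf{e}_3\rangle = -u_z/W$ and the Levi-Civita mean curvature of the graph obeys the standard divergence formula, $H^{\nabla}=0$ translates into the quasilinear PDE
\begin{equation*}
(1+u_z^2)\,u_{xx} - 2 u_x u_z u_{xz} + (1+u_x^2)\,u_{zz} + 2 u_z (1 + u_x^2 + u_z^2) = 0,
\end{equation*}
the direct analog of \eqref{7} for the graph direction $y=u(x,z)$.

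I would then run a case analysis on the vanishing of the components of $\mathbf{v}$, using the first equation to express $u$ in terms of a single-variable arbitrary function before substituting into the second. When $a=0$ (which forces $c\neq 0$) one gets $u=(b/c)\,z+f(x)$; for $b=0$ this gives $f''=0$ and the plane of case 1, while for $bc\neq 0$ the second equation, after using $b^2+c^2=1$, reduces to $f''(x) = -(2b/c)(1+c^2 f'(x)^2)$, separable via $p=cf'$ to yield case 2. When $c=0$ and $a\neq 0$ the symmetric computation gives $u=(b/a)\,x+g(z)$ with $g''(z)=-2g'(z)(1+a^2 g'(z)^2)$, integrated by partial fractions into the arctangent expression of case 3. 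When $ac\neq 0$, the method of characteristics gives $u=(b/a)\,x+h(\xi)$ with $\xi=z-(c/a)x$; substituting into the second equation and collapsing all coefficients via $a^2+b^2+c^2=1$ leaves
\begin{equation*}
h''(\xi) = -2\, h'(\xi)\bigl[\,1+h'(\xi)^2-(b\,h'(\xi)+c)^2\,\bigr].
\end{equation*}
For $b=0$ the bracket becomes $a^2+h'^2$, the ODE separates directly, and the substitution $w=e^{2a^2\xi}$ reduces it to a standard secant-type integral, giving case 4.

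The main obstacle will be case 5, where $b\neq 0$ keeps the bracket genuinely quadratic in $h'$. My plan is to complete the square by setting $q=(a^2+c^2)\,h'(\xi)-bc$, which rewrites the bracket as $[a^2+q^2]/(a^2+c^2)$; the chain-rule identity $dq/dh=q'(\xi)/h'(\xi)$ then collapses the second-order ODE into the autonomous first-order equation
\begin{equation*}
\frac{dq}{dh} = -2(a^2+q^2),
\end{equation*}
which integrates at once to $q=-|a|\tan(2|a|h+\lambda_6)$. Substituting this back into $d\xi=(a^2+c^2)\,dh/(q+bc)$ and using the amplitude/phase identity $bc\cos\theta-|a|\sin\theta=\sqrt{a^2+b^2 c^2}\,\cos(\theta+\phi)$, with $\tan\phi=|a|/(bc)$, splits the integrand into a constant plus a logarithmic derivative; integrating produces the implicit formula for $\xi=z-(c/a)x$ in terms of $h$ stated in case 5. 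A final verification that each listed family indeed satisfies both reduced equations closes the classification.
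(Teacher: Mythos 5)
Your proposal is correct and follows essentially the same route as the paper: the same reduction of $H^{\nabla}=0$ with $\alpha\neq 0$ to the pair $au_x+cu_z=b$ and the quasilinear PDE \eqref{14}, the same case split on the vanishing of $a$, $c$, $b$, and the same integrations (your substitution $q=(a^2+c^2)h'-bc$ with $dq/dh=-2(a^2+q^2)$ is exactly the first integration of \autoref{21} leading to \autoref{22}). The only cosmetic difference is that you derive \eqref{14} explicitly from $\mathbf{H}^{\nabla}=\mathbf{H}^{L}-\langle\mathbf{n},\mathbf{e}_3\rangle\mathbf{n}$ and normalize the ODE brackets using $a^2+b^2+c^2=1$, which the paper leaves implicit.
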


\begin{proof}
Let $M^{2}$ be locally given by 
\begin{equation*}
(x,z)\longmapsto \mathbf{r}\left( x,z\right) =\left( x,u\left( x,z\right)
,z\right) ,
\end{equation*}%
for a smooth function $u=u\left( x,z\right) $. The normal vector field on $%
M^{2}$ is%
\begin{equation}
\mathbf{n}=\frac{u_{x}\mathbf{e}_{1}-\mathbf{e}_{2}+u_{z}\mathbf{e}_{3}}{%
\sqrt{1+\left( u_{x}\right) ^{2}+\left( u_{z}\right) ^{2}}}.  \label{12}
\end{equation}%
Because $M^{2}$ is $\nabla -$singular minimal, we get \autoref{5}. Assume
that $M^{2}$ is $\nabla -$minimal. Due to $\alpha \neq 0,$ Eqs. \eqref{5}
and \eqref{12} follow $\left\langle \mathbf{n},\mathbf{v}\right\rangle =0$
and 
\begin{equation}
au_{x}+cu_{z}=b.  \label{13}
\end{equation}%
Remark also that we may write $\mathbf{v}=a\mathbf{r}_{x}+c\mathbf{r}_{z},$
which means that the tangent plane of $M$ at any point is parallel to $%
\mathbf{v}.$ The condition of $\nabla -$minimality leads to%
\begin{equation}
\left[ 1+\left( u_{z}\right) ^{2}\right] u_{xx}-2u_{x}u_{z}u_{xz}+\left[
1+\left( u_{x}\right) ^{2}\right] u_{zz}+2\left[ 1+\left( u_{x}\right)
^{2}+\left( u_{z}\right) ^{2}\right] u_{z}=0.  \label{14}
\end{equation}%
We distinguish several cases:

\begin{enumerate}
\item $a=0,$ $c\neq 0.$ Then \autoref{13} gives $u_{z}=\frac{b}{c}$ and so \autoref{14} turns $M^{2}$ to a plane parallel to $\mathbf{v}$ if $b=0.$
Otherwise, $b\neq 0,$ the solution to \autoref{13} is given by $u\left(
x,z\right) =\frac{b}{c}z+f\left( x\right) ,$ for an arbitrary smooth
function $f.$ Hence \autoref{14} reduces to%
\begin{equation}
\frac{cf^{\prime \prime }}{1+\left( cf^{\prime }\right) ^{2}}=-2b,
\label{15}
\end{equation}%
where $f^{\prime }=\frac{df}{dx},$ etc. The second statement of the theorem
is obtained by integrating \autoref{15}.

\item $a\neq 0,$ $c=0.$ Then \autoref{13} gives $u\left( x,z\right) =\frac{%
b}{a}x+g\left( z\right) $ for an arbitrary smooth function $g$ and so \autoref{14} may be written as
\begin{equation}
\frac{g^{\prime \prime }}{g^{\prime }}-\frac{a^{2}g^{\prime }g^{\prime
\prime }}{1+\left( ag^{\prime }\right) ^{2}}=-2,  \label{16}
\end{equation}%
where $g^{\prime }=\frac{dg}{dz},$ etc. Integrating \autoref{16}, we
obtain the third statement of the theorem.

\item $ac\neq 0.$ The solution to \autoref{13} is%
\begin{equation}
u\left( x,z\right) =\frac{b}{a}x+h\left( z-\frac{c}{a}x\right) ,  \label{17}
\end{equation}%
for an arbitrary smooth function $h$. By plugging the partial derivatives of
\autoref{17}\ into  \autoref{14}, we write%
\begin{equation}
h^{\prime \prime }+2\left[ a^{2}+\left( b-ch^{\prime }\right) ^{2}+\left(
ah^{\prime }\right) ^{2}\right] h^{\prime }=0,  \label{18}
\end{equation}%
where $h^{\prime }=\frac{dh}{d\tilde{z}},$ $h^{\prime \prime }=\frac{d^{2}h}{%
d\tilde{z}^{2}},$ $\tilde{z}=z-\frac{c}{a}x.$ We have two subcases: the
first subcase is that $b=0.$ Then Eq. \eqref{18} may be rewritten as%
\begin{equation}
\frac{h^{\prime \prime }}{h^{\prime }}-\frac{h^{\prime }h^{\prime \prime }}{%
a^{2}+\left( h^{\prime }\right) ^{2}}=-2a^{2}.  \label{19}
\end{equation}%
The fourth statement of the theorem is proved by integrating \autoref{19}.
The second subcase is $b\neq 0.$ Hence, we may write \autoref{18} as
\begin{equation}
\frac{-\left( a^{2}+c^{2}\right) h^{\prime \prime }}{a^{2}+\left( bc-\left(
a^{2}+c^{2}\right) h^{\prime }\right) ^{2}}=2h^{\prime }.  \label{21}
\end{equation}%
A first integration of \autoref{21} yields
\begin{equation}
\frac{\left( a^{2}+c^{2}\right) dh}{-\left\vert a\right\vert \tan \left(
2\left\vert a\right\vert h+\lambda \right) +bc}=d\tilde{z},  \label{22}
\end{equation}%
for $\lambda \in \mathbb{R}.$ By a first integration of \autoref{22}, we finish the proof.
\end{enumerate}
\end{proof}

\begin{remark}\label{rem2}
The surfaces given in the second and third statements of \autoref{teo2} are $%
\nabla -$minimal generalized cylinders and are examples of $\nabla -$minimal
translation surfaces of type $y=p\left( x\right) +q\left( z\right) ,$ which
was found by Wang \cite{Wang}. However, the surfaces given in the last two
statements of \autoref{teo2} are a $\nabla -$minimal affine translation surface.
\end{remark}

Lastly, we deal with a surface $M^{2}$ of type $x=u\left( y,z\right) $. The
unit normal vector field on $M^{2}$ is%
\begin{equation*}
\mathbf{n}=\frac{\mathbf{e}_{1}-u_{y}\mathbf{e}_{2}-u_{z}\mathbf{e}_{3}}{%
\sqrt{1+\left( u_{y}\right) ^{2}+\left( u_{z}\right) ^{2}}}.
\end{equation*}%
Suppose that $M^{2}$ is $\nabla -$singular minimal with respect to the
vector $\mathbf{v}=\left( a,b,c\right) $. The mean curvature is same as that
of the surface of type $y=u\left( x,z\right) .$ If $M^{2}$ is also $\nabla -$%
minimal, then \autoref{3} gives%
\begin{equation*}
bu_{y}+cu_{z}=a,
\end{equation*}%
where $b^{2}+c^{2}\neq 0.$ Therefore, without giving a proof, we may state a
similar result for those surfaces of type $x=u\left( y,z\right) $ to \autoref{teo2} by replacing $x$ with $y$ and $a$ with $b.$

\subsection{$D-$Singular minimal surfaces}

Let $D$ be the semi-symmetric non-metric connection on $\mathbb{R}^{3}$
given by \cite{Wang}%
\begin{equation}
D_{\mathbf{x}}\mathbf{y}=\nabla _{\mathbf{x}}^{L}\mathbf{y}+\left\langle 
\mathbf{y},\mathbf{e}_{3}\right\rangle \mathbf{x},  \label{23}
\end{equation}%
where $\mathbf{x},\mathbf{y}$ are tangent vector fields to $\mathbb{R}^{3}.$
The nonzero derivatives are 
\begin{equation*}
D_{\mathbf{e}_{1}}\mathbf{e}_{3}=\mathbf{e}_{1},\text{ }D_{\mathbf{e}_{2}}%
\mathbf{e}_{3}=\mathbf{e}_{2},\text{ }D_{\mathbf{e}_{3}}\mathbf{e}_{3}=%
\mathbf{e}_{3}.
\end{equation*}

\begin{definition}\label{def4}
Let $\mathbf{r}$ be a smooth immersion of an oriented surface $M^{2}$ into $%
\mathbb{R}^{3}$ and $\mathbf{n}$ unit normal vector field on $M^{2}$ and $%
H^{D}$ denote the mean curvature with respect to $D.$ Let $\mathbf{v}\in 
\mathbb{R}^{3},$ $\mathbf{v}\neq \mathbf{0},$ a unit fixed vector
non-parallel to $\mathbf{n}.$ The surface $M^{2}$ is called $D-$s\textit{%
ingular minimal surface} with respect to $\mathbf{v}$ if holds 
\begin{equation}
2H^{D}=\alpha \frac{\left\langle \mathbf{n},\mathbf{v}\right\rangle }{%
\left\langle \mathbf{r},\mathbf{v}\right\rangle },\text{ }\alpha \in \mathbb{%
R},\text{ }\alpha \neq 0.  \label{24}
\end{equation}
\end{definition}

In particular, the surface $M^{2}$ is said to be $D-$\textit{minimal} if $%
H^{D}=0$. We first consider the $D-$singular minimal surfaces of type $%
z=u\left( x,y\right) $ which are $D-$minimal. Hence \autoref{24} gives $
\left\langle \mathbf{n},\mathbf{v}\right\rangle =0$ and 
\begin{equation}
au_{x}+bu_{y}=c,  \label{25}
\end{equation}%
where $\mathbf{v}=\left( a,b,c\right) $ and $a^{2}+b^{2}\neq 0.$ Morever the
condition of $D-$minimality yields%
\begin{equation}
\left[ 1+\left( u_{y}\right) ^{2}\right] u_{xx}-2u_{x}u_{y}u_{xy}+\left[
1+\left( u_{x}\right) ^{2}\right] u_{yy}=0,  \label{26}
\end{equation}%
where the roles of $x$ and $y$ are symmetric. If $a=0$, then \autoref{25}
follows $u\left( x,y\right) =f\left( x\right) +\frac{c}{b}y,$ for an
arbitrary smooth function $f.$ Considering this into \autoref{26} yields $
\frac{1}{b^{2}}\frac{d^{2}f}{dx^{2}}=0,$ which leads $M^{2}$ to be a plane
parallel to $\mathbf{v}.$ By symmetry, we may obtain same obtain when $a\neq
0$ and $b=0$. Let $ab\neq 0.$ Then the solution to \autoref{25} is $
u\left( x,y\right) =\frac{c}{a}x+g\left( y-\frac{b}{a}x\right) ,$ for an
arbitrary smooth function $f.$ After substituting its partial derivatives
into \autoref{26}, we conclude $\frac{1}{a^{2}}\frac{d^{2}g}{d\tilde{y}^{2}%
}=0,$ $\tilde{y}=y-\frac{b}{a}x,$ yielding that $M$ is a plane parallel to $%
\mathbf{v}.$

Therefore we state the following

\begin{theorem}\label{teo3}
Let $M^{2}$ be a $D-$singular minimal surface in $\mathbb{R}^{3}$ of type $%
z=u\left( x,y\right) $ with respect to a unit vector $\mathbf{v}=\left(
a,b,c\right) ,$ $a^{2}+b^{2}\neq 0.$ If $M^{2}$ is $D-$minimal, then it is a
plane parallel to $\mathbf{v}.$
\end{theorem}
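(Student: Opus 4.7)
The plan is to follow the strategy already sketched in the paragraph preceding the theorem and to make the case analysis fully explicit. Writing $M^{2}$ as the graph $\mathbf{r}(x,y)=(x,y,u(x,y))$, the unit normal is $\mathbf{n}=(-u_x\mathbf{e}_1-u_y\mathbf{e}_2+\mathbf{e}_3)/\sqrt{1+u_x^2+u_y^2}$. Since $\alpha\neq 0$ and $H^{D}=0$, \autoref{24} forces $\langle\mathbf{n},\mathbf{v}\rangle=0$, which is precisely the first-order PDE \autoref{25}, namely $au_x+bu_y=c$. Separately, the condition $H^{D}=0$ for a graph (using the Gauss formula associated with $D$) is the quasilinear PDE \autoref{26}. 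So the whole theorem reduces to showing that every joint solution of \autoref{25} and \autoref{26} is affine.

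I would then split the argument into three cases according to which of $a,b$ vanish. When $a=0$ (hence $b\neq 0$), the general solution of \autoref{25} is $u(x,y)=f(x)+\tfrac{c}{b}y$, so $u_y$ is a constant and $u_{xy}=u_{yy}=0$. Substituting into \autoref{26} collapses the left-hand side to a scalar multiple of $f''(x)$, forcing $f$ to be affine. The case $b=0$ is identical by the symmetry of \autoref{26} in $x,y$. In the remaining case $ab\neq 0$, \autoref{25} is solved by $u(x,y)=\tfrac{c}{a}x+g(\tilde{y})$ with $\tilde{y}=y-\tfrac{b}{a}x$, and a direct substitution of the partial derivatives
\[
u_x=\tfrac{c}{a}-\tfrac{b}{a}g'(\tilde{y}),\quad u_y=g'(\tilde{y}),\quad u_{xx}=\tfrac{b^2}{a^2}g''(\tilde{y}),\quad u_{xy}=-\tfrac{b}{a}g''(\tilde{y}),\quad u_{yy}=g''(\tilde{y})
\]
into \autoref{26} produces an expression of the form $g''(\tilde{y})\,\Phi(g'(\tilde{y}))=0$.

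The only non-routine step is checking that $\Phi$ never vanishes, so that $g''\equiv 0$. Expanding, all terms involving $g'$ cancel and $\Phi$ simplifies to
\[
\Phi=1+\tfrac{b^2}{a^2}+\tfrac{c^2}{a^2}=\tfrac{a^2+b^2+c^2}{a^2}=\tfrac{1}{a^2},
\]
where the last equality uses that $\mathbf{v}$ is a \emph{unit} vector; this is precisely the identity announced in the discussion above the theorem. Hence $g''\equiv 0$, so $g$ is affine in $\tilde{y}$ and $u$ is affine in $(x,y)$. In all three cases $M^{2}$ is a plane, and because $\langle\mathbf{n},\mathbf{v}\rangle=0$ is maintained throughout, this plane is parallel to $\mathbf{v}$, finishing the proof.

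The only potential obstacle is the algebraic cancellation in case $ab\neq 0$: one has to verify carefully that every term involving $g'$ indeed disappears, since otherwise \autoref{26} would reduce to a genuinely nonlinear ODE with non-affine solutions (as happened in the $\nabla$-analogue of \autoref{teo1}). That this does \emph{not} happen here is the essential content of the theorem, and it is directly attributable to the absence of the additional $-2[1+u_x^2+u_y^2]$ term in \autoref{26} as compared to \autoref{7}.
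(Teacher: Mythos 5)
Your proposal is correct and follows essentially the same route as the paper: derive $\langle\mathbf{n},\mathbf{v}\rangle=0$ from \autoref{24}, split into the cases $a=0$, $b=0$, $ab\neq 0$, solve the linear equation \autoref{25}, and substitute into \autoref{26} to obtain $\tfrac{1}{b^{2}}f''=0$ or $\tfrac{1}{a^{2}}g''=0$ via the unit-vector identity $a^{2}+b^{2}+c^{2}=1$. Your explicit verification of the cancellation of the $g'$-terms is exactly the computation the paper leaves implicit.
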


When we take surfaces of type $y=u\left( x,z\right) $ and $x=u\left(
y,z\right) ,$ we get similar equations to Eqs. \eqref{25} and \eqref{26} and
thus the above result remains true for those surfaces as well.

\section{Singular minimal surfaces in $\mathbb{R}_{1}^{3}$}\label{sec4}

\subsection{$\protect\nabla -$Singular minimal surfaces}

Let $\nabla ^{L}$ be the Levi-Civita connection $\mathbb{R}_{1}^{3}$ and $%
\mathbf{x},\mathbf{y}$ tangent vector fields to $\mathbb{R}_{1}^{3}$.
Consider the following semi-symmetric metric connection on $\mathbb{R}%
_{1}^{3}$ \cite{Wang}%
\begin{equation}
\nabla _{\mathbf{x}}\mathbf{y}=\nabla _{\mathbf{x}}^{L}\mathbf{y}%
+\left\langle \mathbf{y},\mathbf{e}_{3}\right\rangle _{L}\mathbf{x}%
-\left\langle \mathbf{x},\mathbf{y}\right\rangle _{L}\mathbf{e}_{3}.
\label{27}
\end{equation}%
The nonzero derivatives are 
\begin{equation*}
\nabla _{\mathbf{e}_{1}}\mathbf{e}_{1}=-\mathbf{e}_{3},\text{ }\nabla _{%
\mathbf{e}_{1}}\mathbf{e}_{3}=-\mathbf{e}_{1},\text{ }\nabla _{\mathbf{e}%
_{2}}\mathbf{e}_{2}=-\mathbf{e}_{3},\text{ }\nabla _{\mathbf{e}_{2}}\mathbf{e%
}_{3}=-\mathbf{e}_{2}.
\end{equation*}

\begin{definition}\label{def5}
Let $\mathbf{r}$ be a smooth immersion of an oriented timelike surface $M^{2}
$ in $\mathbb{R}_{1}^{3}$ and $\mathbf{n}$ unit normal vector field on $M^{2}
$ and $H^{\nabla }$ the mean curvature of $M^{2}$ with respect to $\nabla .$
Let $\mathbf{v\neq 0}\in \mathbb{R}_{1}^{3}$ a unit fixed spacelike vector
non-parallel to $\mathbf{n}$ such that $\mathbf{n}$ and $\mathbf{v}$ span a
spacelike 2-space. $M^{2}$ is called $\nabla -$\textit{singular minimal
surface} with respect to $\mathbf{v}$ if satisfies%
\begin{equation}
2H^{\nabla }=\alpha \frac{\left\langle \mathbf{n},\mathbf{v}\right\rangle
_{L}}{\left\langle \mathbf{r},\mathbf{v}\right\rangle _{L}},\text{ }\alpha
\in \mathbb{R},\text{ }\alpha \neq 0.  \label{28}
\end{equation}
\end{definition}

The surface $M^{2}$ is called $\nabla -$\textit{minimal} if $H^{\nabla }=0.$
With \autoref{def5}, we classify the $\nabla -$singular minimal surfaces of
type $z=u\left( x,y\right) ,$ which are $\nabla -$minimal.

\begin{theorem}\label{teo4}
Let $M^{2}$ be a $\nabla -$singular minimal surface in $\mathbb{R}_{1}^{3}$
of type $z=u\left( x,y\right) $ with respect to a unit spacelike vector $%
\mathbf{v}=\left( a,b,c\right) $. If $M^{2}$ is $\nabla -$minimal, then one
of the following happens

\begin{enumerate}
\item $\mathbf{v}=\left( 0,b\neq 0,c\right) $ and 
\begin{equation*}
u\left( x,y\right) =\frac{c}{b}y+\frac{1}{2b^{2}}\ln \left[ \cosh \left(
2bx+\lambda _{1}\right) \right] +\lambda _{2};
\end{equation*}

\item $\mathbf{v}=\left( a\neq 0,0,c\right) $ and 
\begin{equation*}
u\left( x,y\right) =\frac{c}{a}x+\frac{1}{2a^{2}}\ln \left[ \cosh \left(
2ay+\lambda _{3}\right) \right] +\lambda _{4};
\end{equation*}

\item $\mathbf{v}=\left( a,b,c\right) ,$ $ab\neq 0,$ and 
\begin{equation*}
u\left( x,y\right) =\frac{c}{a}x+\frac{bc}{a^{2}+b^{2}}\left( y-\frac{b}{a}%
x\right) +\frac{1}{2\left( a^{2}+b^{2}\right) }\ln \left[ \cosh \left(
-2\left\vert a\right\vert \left\{ y-\frac{b}{a}x\right\} +\lambda
_{5}\right) \right] +\lambda _{6},
\end{equation*}%
where $\lambda _{1},...,\lambda _{6}\in \mathbb{R}$, $\lambda _{5}\neq 0.$
\end{enumerate}
\end{theorem}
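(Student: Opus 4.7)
My plan is to follow the template of the proof of \autoref{teo1}, carrying the Euclidean computations over to the Lorentzian setting with the appropriate sign adjustments. I would parametrize $M^2$ as the graph $\mathbf{r}(x,y)=(x,y,u(x,y))$; the induced first fundamental form has determinant $1-u_x^2-u_y^2$, so the timelike hypothesis forces $u_x^2+u_y^2>1$ and yields the spacelike unit normal
\begin{equation*}
\mathbf{n}=\frac{u_x\mathbf{e}_1+u_y\mathbf{e}_2+\mathbf{e}_3}{\sqrt{u_x^2+u_y^2-1}}.
\end{equation*}
Since $\alpha\ne 0$, imposing $H^{\nabla}=0$ in \eqref{28} forces $\langle\mathbf{n},\mathbf{v}\rangle_L=0$, which reduces to exactly the same linear first-order PDE $au_x+bu_y=c$ as in \eqref{6}.

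The second step is to derive the $\nabla$-minimality PDE from \eqref{27}. I would compute $\nabla_{\mathbf{r}_x}\mathbf{r}_x$, $\nabla_{\mathbf{r}_x}\mathbf{r}_y$, $\nabla_{\mathbf{r}_y}\mathbf{r}_y$ using the derivatives listed under \eqref{27}, extract their normal components via the Gauss decomposition, and trace against the inverse Lorentzian first fundamental form. I expect this to yield the Lorentzian analogue of \eqref{7}, of the shape
\begin{equation*}
[1-(u_y)^2]u_{xx}+2u_xu_yu_{xy}+[1-(u_x)^2]u_{yy}+2[(u_x)^2+(u_y)^2-1]=0,
\end{equation*}
the sign flips originating from $\langle\mathbf{e}_3,\mathbf{e}_3\rangle_L=-1$ and $\nabla_{\mathbf{e}_i}\mathbf{e}_3=-\mathbf{e}_i$.

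I would then split into the same three cases as in \autoref{teo1}. For $a=0$, the ansatz $u(x,y)=f(x)+(c/b)y$ collapses the system to an ODE of the form $bf''/[1-(bf')^2]=2b$; the first integration gives $bf'=\tanh(2bx+\lambda_1)$, and the second yields statement~(1). The case $b=0$ follows by the $x\leftrightarrow y$, $a\leftrightarrow b$ symmetry, giving statement~(2). For $ab\ne 0$, substituting $u(x,y)=(c/a)x+g(y-(b/a)x)$ reduces the system to an ODE analogous to \eqref{11} but with a sign flip in the denominator,
\begin{equation*}
\frac{(a^2+b^2)g''}{a^2-(bc-(a^2+b^2)g')^2}=2,
\end{equation*}
from which two integrations produce the $\cosh$-formula in statement~(3); the stipulation $\lambda_5\ne 0$ appears naturally as the integration constant that keeps the denominator away from zero.

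The main obstacle I anticipate is the careful derivation of the $\nabla$-minimality PDE in the second step. The Lorentzian signature together with the specific form of \eqref{27} produces several concurrent sign flips (in $\nabla_{\mathbf{e}_i}\mathbf{e}_3$, in the normalization $\sqrt{u_x^2+u_y^2-1}$, and in the contractions against the inverse metric), and one must track them in unison to confirm that the net effect replaces each $1+(u_\alpha)^2$-factor of \eqref{7} by the corresponding $1-(u_\alpha)^2$-factor, so that the subsequent integrations yield hyperbolic rather than trigonometric primitives. A secondary consistency check is to verify that each explicit solution in (1)--(3) satisfies $u_x^2+u_y^2>1$ on an open set, so that $M^2$ is genuinely timelike there.
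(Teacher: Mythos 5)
Your proposal follows the paper's proof essentially line for line: the same spacelike unit normal (up to an irrelevant sign), the same linear constraint $au_x+bu_y=c$ of \eqref{29} coming from $\left\langle \mathbf{n},\mathbf{v}\right\rangle _{L}=0$, the same $\nabla$-minimality PDE \eqref{30}, the same three-case split, and the same reductions to the ODEs \eqref{31} and \eqref{34} (your case-(3) equation is \eqref{34} multiplied by $-1$). The one point you raise that the paper does not address -- verifying that the explicit solutions actually satisfy $u_x^2+u_y^2>1$, which governs whether the first integration of \eqref{31} lands on the $\tanh$ or the $\coth$ branch -- is left unchecked in the paper as well, so your outline is a faithful reconstruction of its argument.
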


\begin{proof}
The unit spacelike normal vector field on $M^{2}$ is%
\begin{equation*}
\mathbf{n}=\frac{-u_{x}\mathbf{e}_{1}-u_{y}\mathbf{e}_{2}-\mathbf{e}_{3}}{%
\sqrt{-1+\left( u_{x}\right) ^{2}+\left( u_{y}\right) ^{2}}},
\end{equation*}
Suppose that $M^{2}$ is $\nabla -$minimal. Due to $\alpha \neq 0,$ \autoref{28} gives $\left\langle \mathbf{n},\mathbf{v}\right\rangle =0$ and%
\begin{equation}
au_{x}+bu_{y}=c.  \label{29}
\end{equation}%
The condition of $\nabla -$minimality yields%
\begin{equation}
\left[ 1-\left( u_{y}\right) ^{2}\right] u_{xx}+2u_{x}u_{y}u_{xy}+\left[
1-\left( u_{x}\right) ^{2}\right] u_{yy}+2\left[ -1+\left( u_{x}\right)
^{2}+\left( u_{y}\right) ^{2}\right] =0.  \label{30}
\end{equation}%
We distinguish several cases: the first case is that $a=0$. Then \autoref{29} follows $u\left( x,y\right) =f\left( x\right) +\frac{c}{b}y,$ for
an arbitrary smooth function $f.$ Considering this into \autoref{30} yields
\begin{equation}
\frac{bf^{\prime \prime }}{1-\left( bf^{\prime }\right) ^{2}}=2b,  \label{31}
\end{equation}
where $f^{\prime }=\frac{df}{dx}$ and so. The first statement of the theorem
is derived by integrating \autoref{31}. The roles of $x$ and $y$ in \autoref{30} are symmetric and hence we may conclude the second statement of
the theorem by similar steps when $a\neq 0$ and $b=0.$ The last case is that 
$ab\neq 0.$ Then the solution to \autoref{29} is given by 
\begin{equation}
u\left( x,y\right) =\frac{c}{a}x+g\left( y-\frac{b}{a}x\right) ,  \label{32}
\end{equation}%
for a smooth function $g.$ Substituting \autoref{32} into \autoref{30} follows 
\begin{equation}
g^{\prime \prime }+2\left[ -a^{2}+\left( c-bg^{\prime }\right) ^{2}+\left(
ag^{\prime }\right) ^{2}\right] =0,  \label{33}
\end{equation}%
where $g^{\prime }=\frac{dg}{d\tilde{y}},$ $g^{\prime \prime }=\frac{d^{2}g}{%
d\tilde{y}^{2}},$ $\tilde{y}=y-\frac{b}{a}x.$ \autoref{33} may be rewritten as
\begin{equation}
\frac{-\left( a^{2}+b^{2}\right) g^{\prime \prime }}{a^{2}-\left[ bc-\left(
a^{2}+b^{2}\right) g^{\prime }\right] ^{2}}=-2.  \label{34}
\end{equation}%
The proof is completed by integrating \autoref{34}.
\end{proof}

\begin{remark}\label{rem3}
The last statement of \autoref{teo4} is a new example in $\mathbb{R}_{1}^{3}$ of $%
\nabla -$minimal surfaces while the first two statements are $\nabla -$%
minimal translation surfaces, introduced by Wang \cite{Wang}.
\end{remark}

In the following we classify $\nabla -$singular minimal surfaces in $\mathbb{%
R}_{1}^{3}$ of type $y=u\left( x,z\right) $ which are $\nabla -$minimal.

\begin{theorem}\label{teo5}
Let $M^{2}$ be a $\nabla -$singular minimal surface in $\mathbb{R}_{1}^{3}$
of type $y=u\left( x,z\right) $ with respect to a unit spacelike vector $%
\mathbf{v}=\left( a,b,c\right) ,$ $a^{2}+c^{2}\neq 0$. If $M^{2}$ is $\nabla
-$minimal, then one of the following happens

\begin{enumerate}
\item $M^{2}$ is a plane parallel to $\mathbf{v}=\left( a,b,0\right) ,$ $%
a\neq 0;$

\item $\mathbf{v}=\left( 0,b,c\right) ,$ $bc\neq 0$ and 
\begin{equation*}
u\left( x,y\right) =\frac{b}{c}z+\frac{1}{2bc}\ln \left[ \cosh \left(
2bx+\lambda _{1}\right) \right] +\lambda _{2};
\end{equation*}

\item $\mathbf{v}=\left( a,b,0\right) ,$ $a\neq 0,$ and 
\begin{equation*}
u\left( x,z\right) =\frac{b}{a}x\pm \frac{1}{2\left\vert a\right\vert }\sinh
^{-1}\left( \lambda _{3}e^{2z}\right) +\lambda _{4},\text{ }\lambda _{3}\neq
0;
\end{equation*}

\item $\mathbf{v}=\left( a,0,c\right) ,$ $ac\neq 0,$ and%
\begin{equation*}
u\left( x,z\right) =\pm \frac{1}{2\left\vert a\right\vert }\sinh ^{-1}\left[
\left\vert \lambda _{5}\right\vert e^{2a^{2}\left( z-\frac{c}{a}x\right) }%
\right] +\lambda _{6},\text{ }\lambda _{5}\neq 0;
\end{equation*}

\item $\mathbf{v}=\left( a,\pm 1,c\right) ,$ $a=\pm c,$ $c\neq 0,$ and%
\begin{equation*}
u\left( x,z\right) =\frac{\pm 1}{c}x\pm \frac{1}{4c}\ln \left[ 1\pm 2\lambda
_{7}e^{2\left( 1+c^{2}\right) \left( z\pm x\right) }\right] +\lambda _{8},%
\text{ }\lambda _{7}\neq 0;
\end{equation*}

\item $\mathbf{v}=\left( a,b,c\right) ,$ $abc\neq 0,$ and%
\begin{equation*}
u\left( x,z\right) =\frac{b}{a}x+h\left( z-\frac{c}{a}x\right) ,
\end{equation*}%
where $h$ is a smooth function satisfying%
\begin{gather*}
z-\frac{c}{a}x=\frac{-bc\left( c^{2}-a^{2}\right) }{2\left\vert a\right\vert
\left( a^{2}-b^{2}c^{2}\right) }\left( 2\left\vert a\right\vert h+\lambda
_{9}\right) - \\
-\frac{c^{2}-a^{2}}{2\left( a^{2}-b^{2}c^{2}\right) }\ln \left[ bc\cosh
\left( 2\left\vert a\right\vert h+\lambda _{9}\right) -\left\vert
a\right\vert \sinh \left( 2\left\vert a\right\vert h+\lambda _{9}\right) %
\right] +\lambda _{10},
\end{gather*}%
for $\lambda _{1},...,\lambda _{10}\in \mathbb{R}.$
\end{enumerate}
\end{theorem}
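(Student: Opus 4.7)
The plan is to adapt the strategy of \autoref{teo2} to the Lorentz-Minkowski setting, exploiting the parallel structure of the semi-symmetric connections $\nabla$ on $\mathbb{R}^{3}$ and on $\mathbb{R}_{1}^{3}$. First I would parametrize $M^{2}$ by $(x,z)\mapsto \mathbf{r}(x,z)=(x,u(x,z),z)$, verify that it is a timelike graph precisely when $1+u_{x}^{2}-u_{z}^{2}>0$, and write the unit spacelike normal
\begin{equation*}
\mathbf{n}=\frac{u_{x}\mathbf{e}_{1}-\mathbf{e}_{2}+u_{z}\mathbf{e}_{3}}{\sqrt{1+u_{x}^{2}-u_{z}^{2}}}.
\end{equation*}
Using the derivatives of $\nabla$ listed below \eqref{27}, whose signs differ from those below \eqref{4} because $\langle \mathbf{e}_{3},\mathbf{e}_{3}\rangle_{L}=-1$, I would compute the $\nabla$-mean curvature and obtain a second-order quasilinear PDE analogous to \eqref{14}, but with Lorentzian signs in the nonlinear coefficients.

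Combining $H^{\nabla}=0$ with $\alpha\neq 0$ in \eqref{28} forces $\langle \mathbf{n},\mathbf{v}\rangle_{L}=0$, which in coordinates reads
\begin{equation*}
au_{x}-cu_{z}=b.
\end{equation*}
Since $a^{2}+c^{2}\neq 0$, I would split into the sub-cases $a=0$ (with $c\neq 0$), $c=0$ (with $a\neq 0$), and $ac\neq 0$, solving the first-order equation by characteristics to write $u=\tfrac{b}{c}z+f(x)$, $u=\tfrac{b}{a}x+g(z)$, or $u=\tfrac{b}{a}x+h\!\left(z-\tfrac{c}{a}x\right)$ for a smooth function of one variable. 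Substituting each ansatz into the $\nabla$-minimality PDE reduces it to a single ODE for the unknown $f$, $g$, or $h$.

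I would then integrate these ODEs one by one. Because the Lorentzian computation replaces the $1+(\cdot)^{2}$ factors of \autoref{teo2} by $1-(\cdot)^{2}$, the primitives involve $\cosh$, $\sinh^{-1}$, and logarithms of hyperbolic expressions rather than the trigonometric primitives of the Euclidean case, producing the closed forms in conclusions~(2)--(4) and~(6). The plane in conclusion~(1) appears as the degenerate constant solution $g\equiv \mathrm{const}$ of the reduced ODE in the sub-case $c=0$, which must be singled out because the generic quadrature via $\sinh^{-1}$ presupposes $g^{\prime}\not\equiv 0$.

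The main obstacle is the exceptional case~(5), corresponding to $\mathbf{v}=(a,\pm 1,c)$ with $a=\pm c$. Here a coefficient of the form $a^{2}-b^{2}c^{2}$ appearing in the denominator of the generic $ac\neq 0$ reduction (the Lorentzian counterpart of \eqref{21}) vanishes, so the standard $\tanh$-integration breaks down and the ODE must be attacked directly. After a single quadrature the equation becomes separable of a different type, leading to the logarithm of $1\pm 2\lambda_{7}e^{2(1+c^{2})(z\pm x)}$ in conclusion~(5). The remainder of the work is careful bookkeeping of signs and branch choices in the hyperbolic primitives, which is routine but necessary to state the closed-form answers correctly.
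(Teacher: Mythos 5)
Your overall strategy coincides with the paper's: the same graph parametrization, the same use of $\alpha\neq 0$ in \eqref{28} to extract the first-order constraint $\left\langle \mathbf{n},\mathbf{v}\right\rangle _{L}=0$, the same case split $a=0$, $c=0$, $ac\neq 0$ solved by characteristics, the plane of conclusion (1) arising from the trivial solution $g^{\prime }\equiv 0$ of the reduced ODE when $c=0$, and the correct identification of the degenerate subcase behind conclusion (5) (in the paper this is $a^{2}=c^{2}$, which forces $b=\pm 1$ since $\mathbf{v}$ is unit spacelike, and makes the quadratic in $h^{\prime }$ collapse to a linear expression). However, there is a concrete error at the very first step that would derail every explicit formula: your unit normal is not Lorentz-orthogonal to the graph. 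For $\mathbf{r}\left( x,z\right) =\left( x,u\left( x,z\right) ,z\right) $ the tangent vectors are $\mathbf{r}_{x}=\left( 1,u_{x},0\right) $ and $\mathbf{r}_{z}=\left( 0,u_{z},1\right) $, and a vector $\left( n_{1},n_{2},n_{3}\right) $ is $\left\langle \cdot ,\cdot \right\rangle _{L}$-orthogonal to both iff $n_{1}=-n_{2}u_{x}$ and $n_{3}=n_{2}u_{z}$; taking $n_{2}=-1$ gives $\mathbf{n}\propto \left( u_{x},-1,-u_{z}\right) $, with a minus sign on the $z$-component, which is the paper's normal. Your $\left( u_{x},-1,+u_{z}\right) $ satisfies $\left\langle \mathbf{n},\mathbf{r}_{z}\right\rangle _{L}=-2u_{z}\neq 0$ in general; it is the Euclidean normal, not the Lorentzian one.

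Consequently your linear constraint $au_{x}-cu_{z}=b$ is wrong; the correct one is $au_{x}+cu_{z}=b$, Eq. \eqref{35}. This is not mere bookkeeping: it changes the characteristic variable from $z-\frac{c}{a}x$ to $z+\frac{c}{a}x$ and flips signs in the closed forms, e.g. for $\mathbf{v}=\left( 0,b,c\right) $ your constraint yields $u=-\frac{b}{c}z+f\left( x\right) $ rather than the $u=\frac{b}{c}z+f\left( x\right) $ required for conclusion (2). Once the normal and the constraint are corrected, the rest of your plan (reduction to the ODEs \eqref{37}--\eqref{44}, hyperbolic quadratures, and the separate treatment of the degenerate subcase) reproduces the paper's argument. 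A small addition you should make explicit: in each quadrature the non-degeneracy of the timelike graph (for instance $1-\left( cf^{\prime }\right) ^{2}\neq 0$ in the first case) is what licenses the divisions, and the paper records this.
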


\begin{proof}
Let $M^{2}$ be locally given by 
\begin{equation*}
(x,z)\longmapsto \mathbf{r}\left( x,z\right) =\left( x,u\left( x,z\right)
,z\right) ,
\end{equation*}%
for a smooth function $u=u\left( x,z\right) $. The normal vector field on $%
M^{2}$ is%
\begin{equation*}
\mathbf{n}=\frac{u_{x}\mathbf{e}_{1}-\mathbf{e}_{2}-u_{z}\mathbf{e}_{3}}{%
\sqrt{1+\left( u_{x}\right) ^{2}-\left( u_{z}\right) ^{2}}}.
\end{equation*}%
Because $M^{2}$ is $\nabla -$singular minimal, we get \autoref{27}. Assume
that $M^{2}$ is $\nabla -$minimal. Due to $\alpha \neq 0,$ \autoref{27} gives $\left\langle \mathbf{n},\mathbf{v}\right\rangle =0$ and 
\begin{equation}
au_{x}+cu_{z}=b.  \label{35}
\end{equation}%
Remark also that we may write $\mathbf{v}=a\mathbf{r}_{x}+c\mathbf{r}_{z},$
implying the tangent plane of $M$ at any point is parallel to $\mathbf{v}.$
The condition of $\nabla -$minimality yields%
\begin{equation}
\left[ \left( u_{z}\right) ^{2}-1\right] u_{xx}-2u_{x}u_{z}u_{xz}+\left[
1+\left( u_{x}\right) ^{2}\right] u_{zz}-2\left[ 1+\left( u_{x}\right)
^{2}-\left( u_{z}\right) ^{2}\right] u_{z}=0.  \label{36}
\end{equation}%
We distinguish several cases:

\begin{enumerate}
\item $a=0,$ $c\neq 0.$ Then $b\neq 0$ because $\mathbf{v}$ is spacelike.
The solution to \autoref{35} is given by $u\left( x,z\right) =\frac{b}{c}%
z+f\left( x\right) ,$ for an arbitrary smooth function $f.$ Hence \autoref{36} turns to
\begin{equation}
\frac{cf^{\prime \prime }}{1-\left( cf^{\prime }\right) ^{2}}=-2b,
\label{37}
\end{equation}%
where $f^{\prime }=\frac{df}{dx},$ etc. Because $M^{2}$ is non-degenerate, $%
1-\left( cf^{\prime }\right) ^{2}\neq 0.$ Therefore the second statement of
the theorem is proved by integrating \autoref{37}.

\item $a\neq 0,$ $c=0.$ Then \autoref{35} gives $u\left( x,z\right) =\frac{b}{a}x+g\left( z\right) $ for an arbitrary smooth function $g$ and so \autoref{36} leads to 
\begin{equation}
g^{\prime \prime }-2\left[ 1-\left( ag^{\prime }\right) ^{2}\right]
g^{\prime }=0,  \label{38}
\end{equation}%
where $g^{\prime }=\frac{dg}{dz},$ etc. That $g^{\prime }=0$ is a trivial
solution to \autoref{38}, implying the first statement of the theorem.
Otherwise, $g^{\prime }\neq 0,$  \autoref{38} may be rewritten as
\begin{equation}
\frac{g^{\prime \prime }}{g^{\prime }}+\frac{a}{2}\left( \frac{g^{\prime
\prime }}{1-ag^{\prime }}-\frac{g^{\prime \prime }}{1+ag^{\prime }}\right)
=2.  \label{39}
\end{equation}%
The third statement of the theorem is obtained by integrating \autoref{39}.

\item $ac\neq 0.$ The solution to \autoref{35} is%
\begin{equation*}
u\left( x,z\right) =\frac{b}{a}x+h\left( z-\frac{c}{a}x\right) ,
\end{equation*}%
for an arbitrary smooth function $h$. Therefore \autoref{36} reduces to
\begin{equation}
h^{\prime \prime }-2\left[ a^{2}+\left( b-ch^{\prime }\right) ^{2}-\left(
ah^{\prime }\right) ^{2}\right] h^{\prime }=0,  \label{40}
\end{equation}%
where $h^{\prime }=\frac{dh}{d\tilde{z}},$ $h^{\prime }=\frac{d^{2}h}{d%
\tilde{z}^{2}},$ $\tilde{z}=z-\frac{c}{a}x.$ We have three subcases: the
first one is that $b=0.$ Then \autoref{40} may be rewritten as
\begin{equation}
\frac{h^{\prime \prime }}{h^{\prime }}+\frac{h^{\prime \prime }}{2\left(
a-h^{\prime }\right) }-\frac{h^{\prime \prime }}{2\left( a+h^{\prime
}\right) }=2a^{2}.  \label{41}
\end{equation}%
Integrating \autoref{41} gives the fourth statement of the theorem. The
second subcase is that $a^{2}=c^{2}$ and $b=\pm 1.$ Then \autoref{40} may be rewritten as
\begin{equation}
\frac{\pm 2ch^{\prime \prime }}{1+c^{2}\mp 2ch^{\prime }}+\frac{h^{\prime
\prime }}{h^{\prime }}=2\left( 1+c^{2}\right) .  \label{42}
\end{equation}%
After integrating \autoref{42}, we obtain the fifth statement of the theorem. The third subcase is that $a^{2}\neq c^{2}.$ Then \autoref{40}
may be rewritten as
\begin{equation}
\frac{-\left( c^{2}-a^{2}\right) h^{\prime \prime }}{a^{2}-\left[ bc-\left(
c^{2}-a^{2}\right) h^{\prime }\right] ^{2}}=2h^{\prime }.  \label{43}
\end{equation}%
A first integration of \autoref{43} yields
\begin{equation}
\frac{\left( c^{2}-a^{2}\right) dh}{-\left\vert a\right\vert \tanh \left(
2\left\vert a\right\vert h+\lambda \right) +bc}=d\tilde{z},  \label{44}
\end{equation}%
for $\lambda \in \mathbb{R}.$ The proof is completed by a first integration of \autoref{44}.
\end{enumerate}
\end{proof}

\begin{remark}\label{rem4}
The last three statements of \autoref{teo4} are new examples in $\mathbb{R}%
_{1}^{3}$ of $\nabla -$minimal surfaces while the second and third
statements are $\nabla -$minimal translation surfaces, introduced by Wang 
\cite{Wang}.
\end{remark}

Let $M^{2}$ be a timelike surface $\mathbb{R}_{1}^{3}$ of type $x=u\left(
y,z\right) $. The spacelike unit normal vector field on $M^{2}$ is%
\begin{equation*}
\mathbf{n}=\frac{\mathbf{e}_{1}-u_{y}\mathbf{e}_{2}+u_{z}\mathbf{e}_{3}}{%
\sqrt{1+\left( u_{y}\right) ^{2}-\left( u_{z}\right) ^{2}}}.
\end{equation*}%
Suppose that $M^{2}$ is $\nabla -$singular minimal with respect to the
vector $\mathbf{v}=\left( a,b,c\right) $. If $M^{2}$ is also $\nabla -$%
minimal, then Eq. \eqref{28} gives%
\begin{equation*}
bu_{y}+cu_{z}=a,
\end{equation*}%
where $b^{2}+c^{2}\neq 0.$ Notice that the mean curvature is same as that of
the surface of type $y=u\left( x,z\right) .$ Therefore, without giving a
proof, we may state a similar result for those surfaces of type $x=u\left(
y,z\right) $ to \autoref{teo4} by replacing $x$ with $y$ and $a$ with $b.$

\subsection{$D-$Singular minimal surfaces}

Consider the following semi-symmetric non-metric connection on $\mathbb{R}%
_{1}^{3}$ \cite{Wang}%
\begin{equation}
D_{\mathbf{x}}\mathbf{y}=\nabla _{\mathbf{x}}^{L}\mathbf{y}+\left\langle 
\mathbf{y},\mathbf{e}_{3}\right\rangle \mathbf{x},  \label{45}
\end{equation}%
where $\mathbf{x},\mathbf{y}$ are tangent vector fields to $\mathbb{R}^{3}.$
The nonzero derivatives are 
\begin{equation*}
D_{\mathbf{e}_{1}}\mathbf{e}_{3}=\mathbf{e}_{1},\text{ }D_{\mathbf{e}_{2}}%
\mathbf{e}_{3}=\mathbf{e}_{2},\text{ }D_{\mathbf{e}_{3}}\mathbf{e}_{3}=%
\mathbf{e}_{3}.
\end{equation*}

\begin{definition}\label{def6}
Let $\mathbf{r}$ be a smooth immersion of an oriented timelike surface $M^{2}
$ in $\mathbb{R}_{1}^{3}$ and $\mathbf{n}$ unit normal vector field on $M^{2}
$ and $H^{D}$ the mean curvature of $M^{2}$ with respect to $D.$ Let $%
\mathbf{v\neq 0}\in \mathbb{R}_{1}^{3}$ a unit fixed spacelike vector
non-parallel to $\mathbf{n}$ such that $\mathbf{n}$ and $\mathbf{v}$ span a
spacelike 2-space. $M^{2}$ is called $D-$\textit{singular minimal surface}
with respect to $\mathbf{v}$ if satisfies%
\begin{equation}
2H^{D}=\alpha \frac{\left\langle \mathbf{n},\mathbf{v}\right\rangle _{L}}{%
\left\langle \mathbf{r},\mathbf{v}\right\rangle _{L}},\text{ }\alpha \in 
\mathbb{R},\text{ }\alpha \neq 0.  \label{46}
\end{equation}
\end{definition}

The surface $M^{2}$ is called $D-$\textit{minimal} if $H^{D}=0$. With \autoref{def6}, we first observe the $D-$singular minimal surfaces of type $%
z=u\left( x,y\right) $ which are $D-$minimal. Hence \autoref{46} gives $
\left\langle \mathbf{n},\mathbf{v}\right\rangle =0$ and 
\begin{equation}
au_{x}+bu_{y}=c,  \label{47}
\end{equation}%
where $\mathbf{v}=\left( a,b,c\right) $. The condition of $D-$minimality
yields%
\begin{equation}
\left[ 1-\left( u_{y}\right) ^{2}\right] u_{xx}+2u_{x}u_{y}u_{xy}+\left[
1-\left( u_{x}\right) ^{2}\right] u_{yy}=0,  \label{48}
\end{equation}%
where the roles of $x$ and $y$ are symmetric. If $a=0$, then \autoref{47}
follows $u\left( x,y\right) =f\left( x\right) +\frac{c}{b}y,$ for an
arbitrary smooth function $f.$ Considering this into \autoref{48} yields $\frac{1}{b^{2}}\frac{d^{2}f}{dx^{2}}=0,$ which leads $M^{2}$ to be a plane
parallel to $\mathbf{v}.$ By symmetry, we can obtain same result when $a\neq
0$ and $b=0$. Let $ab\neq 0.$ Then the solution to \autoref{47} is $u\left( x,y\right) =\frac{c}{a}x+g\left( y-\frac{b}{a}x\right) ,$ for a
smooth function $g.$ After substituting its partial derivatives into \autoref{48}, we conclude $\frac{1}{a^{2}}\frac{d^{2}g}{d\tilde{y}^{2}}=0,$ $%
\tilde{y}=y-\frac{c}{a}x,$ yielding that $M$ is a plane parallel to $\mathbf{v}.$

Therefore we state the following

\begin{theorem}\label{teo6}
Let $M^{2}$ be a $D-$singular minimal surface in $\mathbb{R}_{1}^{3}$ of
type $z=u\left( x,y\right) $ with respect to a unit spacelike vector $%
\mathbf{v}.$ If $M^{2}$ is $D-$minimal, then it is a plane parallel to $%
\mathbf{v}.$
\end{theorem}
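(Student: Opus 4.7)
The plan is to combine the two conditions imposed by being simultaneously $D$-singular minimal and $D$-minimal, and to show that the resulting overdetermined system forces $u$ to be an affine function of $(x,y)$, which geometrically is a plane. First I would compute the unit normal $\mathbf{n}$ to the graph $z=u(x,y)$, which is timelike since $M^{2}$ is assumed to be a timelike surface. Because $\alpha\neq 0$, \autoref{46} collapses to $\langle \mathbf{n},\mathbf{v}\rangle_{L}=0$, and writing $\mathbf{v}=(a,b,c)$ this is the first-order linear PDE \eqref{47}, namely $au_{x}+bu_{y}=c$. Independently, imposing $H^{D}=0$ with respect to the connection $D$ defined in \eqref{45} produces the second-order quasilinear PDE \eqref{48}; note the crucial structural feature that, unlike the $\nabla$-minimality equation \eqref{30}, no extra zeroth-order term appears, because the extra piece $-\langle \mathbf{x},\mathbf{y}\rangle\mathbf{e}_{3}$ present in $\nabla$ is absent from $D$.

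Next I would run the same three-case split used elsewhere in the paper, driven by which components of $\mathbf{v}$ vanish. If $a=0$ (so $b\neq 0$, since $\mathbf{v}\neq \mathbf{0}$ and it must be spacelike), equation \eqref{47} forces $u(x,y)=f(x)+\tfrac{c}{b}y$ for some smooth $f$. Substituting into \eqref{48}, the mixed derivative term vanishes, the coefficient $1-(u_{y})^{2}=1-c^{2}/b^{2}$ is a constant, and the $u_{yy}$ term vanishes, so the equation reduces to $\tfrac{1}{b^{2}}f''(x)\bigl(1-c^{2}/b^{2}\bigr)=0$. Non-degeneracy of the induced metric rules out $1-c^{2}/b^{2}=0$, leaving $f''=0$, hence $f$ is affine and $u$ is affine. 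The case $a\neq 0$, $b=0$ follows by the $x\leftrightarrow y$ symmetry of \eqref{48}.

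For the main case $ab\neq 0$, the general solution of \eqref{47} is $u(x,y)=\tfrac{c}{a}x+g(\tilde{y})$ with $\tilde{y}=y-\tfrac{b}{a}x$. Computing the partial derivatives $u_{x}=\tfrac{c}{a}-\tfrac{b}{a}g'$, $u_{y}=g'$, $u_{xx}=\tfrac{b^{2}}{a^{2}}g''$, $u_{xy}=-\tfrac{b}{a}g''$, $u_{yy}=g''$, and plugging into \eqref{48}, the structure of the equation is such that every term proportional to $(g')^{k}g''$ with $k\ge 1$ cancels (this is the same cancellation that produced the analogous collapse in \autoref{teo3} in the Riemannian setting, and is the heart of the computation). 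The surviving equation reduces to a scalar multiple of $g''(\tilde{y})=0$, so again $g$ is affine and $u$ is an affine function of $(x,y)$.

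In all cases $u$ is affine, so $M^{2}$ is a plane; since $\langle \mathbf{n},\mathbf{v}\rangle_{L}=0$ holds on $M^{2}$, this plane is parallel to $\mathbf{v}$. The only real obstacle is verifying that the cross terms in \eqref{48} really do cancel completely in the third case; I would check this by carefully tracking the coefficient of each monomial in $g'$ after substitution, and by checking that the scalar multiplier of $g''$ is non-zero (which is where the non-degeneracy of the timelike surface and the spacelike character of $\mathbf{v}$ enter, ruling out the degenerate values of $(a,b,c)$ that could make the coefficient vanish identically).
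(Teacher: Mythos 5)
Your proposal follows the paper's own argument essentially verbatim: the same reduction of \eqref{46} to $\left\langle \mathbf{n},\mathbf{v}\right\rangle _{L}=0$ and \eqref{47}, the same three-case split on the vanishing of $a$ and $b$, and the same collapse of \eqref{48} to $f''=0$ resp.\ $g''=0$ (the cross-term cancellation you flag does go through, the surviving coefficient of $g''$ being $a^{2}+b^{2}-c^{2}=1$ because $\mathbf{v}$ is unit spacelike). The only slip is peripheral and harmless to the computation: the unit normal of a timelike surface is spacelike, not timelike.
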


When we take the surfaces of type $y=u\left( x,z\right) $ or $x=u\left(
y,z\right) ,$ we may state a similar result to \autoref{teo6}.

\section{Conclusions}

In this study, we discussed the singular minimal surfaces in $\mathbb{R}^{3}$
(resp. $\mathbb{R}_{1}^{3}$) which are minimal and expressed a trivial
outcome, \autoref{prop1} (resp. \autoref{prop2}). Nevertheless, the non-trivial
outcomes, Theorems \ref{teo1} and \ref{teo2} (resp. Theorems \ref{teo4} and \ref{teo5}), were obtained by using
the modified version, \autoref{def3} (resp. \autoref{def5}), of singular
minimality. With this definition, we observed that the singular minimal
surfaces which are minimal are the generalized cylinders and thus our
approach\ also may be viewed as an effort of finding $\nabla -$minimal
generalized cylinders.

The generalized cylinders belong to a subcase of translation surfaces. For
this reason, the $\nabla -$minimal translation surfaces introduced by Wang 
\cite{Wang} were presented by some of our results. Still, we also exhibited
new examples of $\nabla -$minimal surfaces, as explained in Remarks \ref{rem1} and \ref{rem2}
(resp. Remarks \ref{rem3} and \ref{rem4}). Morever, a trivial outcome, \autoref{teo3} (resp.
\autoref{teo6}), was found by using the semi-symmetric non-metric connection $D$
given by \autoref{23} (resp. \autoref{45}).

\end{document}